\documentclass[12pt,reqno]{amsart}
\usepackage{amsfonts}
\usepackage{amssymb,color}
\usepackage{amssymb}

\usepackage[colorlinks=true]{hyperref}
\hypersetup{urlcolor=blue, citecolor=blue}

\usepackage[margin=3cm, a4paper]{geometry}

\newtheorem{theo}{Theorem}[section]
\newtheorem{lemm}[theo]{Lemma}

\newtheorem{prop}[theo]{Proposition}
\newtheorem{rema}[theo]{Remark}
\numberwithin{equation}{section}

\newcommand{\bal}{\begin{align}}
\newcommand{\bbal}{\begin{align*}}
\newcommand{\beq}{\begin{equation}}
\newcommand{\eeq}{\end{equation}}
\newcommand{\bca}{\begin{cases}}
\newcommand{\eca}{\end{cases}}

\newcommand{\pa}{\partial}

\newcommand{\na}{\nabla}
\newcommand{\De}{\Delta}

\newcommand{\cd}{\cdot}

\newcommand{\dd}{\mathrm{d}}

\newcommand{\R}{\mathbb{R}}

\newcommand{\D}{\mathrm{div}}

\newcommand{\Div}{\mathrm{div\,}}
\newcommand{\p}{\partial}
\newcommand{\prq}{\\ &\quad}
\newcommand{\les}{{\lesssim}}
\newcommand{\pr}{\\ &}

\begin{document}

\subjclass[2010]{76W05}
\keywords{Incompressible MHD equations, inviscid limit.}

\title[A note on the inviscid limit of the incompressible MHD equations]{A note on the inviscid limit of the incompressible MHD equations}

\author[J. Li]{Jinlu Li}
\address{Department of Mathematics, Sun Yat-sen University, Guangzhou, 510275, China}
\email{lijl29@mail2.sysu.edu.cn}

\author[Z. Yin]{Zhaoyang Yin}
\address{Department of Mathematics, Sun Yat-sen University, Guangzhou, 510275, China \& Faculty of Information Technology,
 Macau University of Science and Technology, Macau, China}
\email{mcsyzy@mail.sysu.edu.cn}

\begin{abstract}
In this paper, we prove that as the viscosity and resistivity go to zero, the solution of the Cauchy problem for the incompressible MHD equations converges to the solution of the ideal MHD equations in the same topology with the initial data. Our proof mainly depends on the method introduced by the paper \cite{G.L.Y} and the constructions of the incompressible MHD equations.
\end{abstract}

\maketitle

\section{Introduction and main result}

In the paper, we consider the following Cauchy problem of the incompressible MHD equations:
\beq\label{MHD1}\begin{cases}
\partial_tu+u\cdot\nabla u-\mu\De u+\nabla P=b\cdot \nabla b, \\
\partial_tb+u\cdot \nabla b-\nu\De b=b\cdot\nabla u,\\
\mathrm{div} u=\mathrm{div} b=0,\quad (u,b)|_{t=0}=(u_0,b_0),
\end{cases}\eeq
where the unknowns are the vector fields $u: \R\times \R^d\rightarrow \R^d$, $b: \R\times \R^d\rightarrow \R^d$ and the scalar function $P$. Here, $u$ and $b$ are the velocity and magnetic, respectively, while $P$ denotes the pressure, $\mu\geq0$ is the viscosity coefficient and $\nu\geq0$ is the
magnetics diffusive coefficient. We expect the above equations converge to the following ideal MHD equations in the corresponding Besov spaces of the initial data when $\mu$ and $\nu$ tend to 0:
\beq\label{ideal-MHD}\begin{cases}
\partial_tu+u\cdot\nabla u+\nabla P=b\cdot \nabla b, \\
\partial_tb+u\cdot \nabla b=b\cdot\nabla u,\\
\mathrm{div} u=\mathrm{div} b=0,\quad (u,b)|_{t=0}=(u_0,b_0).
\end{cases}\eeq

The MHD system is a well-known model which governs the dynamics of the velocity and magnetic fields in electrically
conducting fluids such as plasmas, liquid metals, and salt water. The vanishing viscosity limit problem is one of the challenging topics in fluid dynamics. It has been studied by many authors in $H^s$ space (see \cite{A.D,D.L,S,W,X.X.W,Z}), where the main approaches were the energy arguments and depending strongly on the treatments of the trilinear forms which lead to the $(\mu,\nu)$ independent estimates. Most of their paper just prove in the lower regularity comparing the topology for the initial data. Therefore, we want to solve the inviscid limit of the incompressible MHD equations in the same topology with the initial data. Our proof mainly relies on the method introduced by the paper \cite{G.L.Y} and the constructions of the incompressible MHD equations.

Then, our argument can state as follows:
\begin{theo}\label{th1}
Let $d\geq 2$ and $s>1+\frac d2$. Suppose that $u^n_0\in H^s(\R^d)$ goes to $u_0\in H^s(\R^d)$ in $H^s(\R^d)$ and $b^n_0\in H^s(\R^d)$ goes to $b_0\in H^s(\R^d)$ in $H^s(\R^d)$ when $n$ goes to infinity. Let $\mu_n,\nu_n\geq0$ and $\mu_n,\nu_n$ go to 0 when $n$ goes to $\infty$. If $\D u_0=\D b_0=\D u^n_0=\D b^n_0=0$, then there exists a positive $T>0$ independent of $n$ such that  $(u^n,b^n)\in \mathcal{C}([0,T];H^s(\R^d))$ be the solution of
\bal\label{revist1-MHD}\begin{cases}
\partial_tu^n+u^n\cdot\nabla u^n-\mu_n\De u^n+\nabla P_n=b^n\cdot \nabla b^n, \\
\partial_tb^n+u^n\cdot \nabla b^n-\nu_n\De b^n=b^n\cdot\nabla u^n,\\
\mathrm{div} u^n=\mathrm{div} b^n=0,\quad (u^n,b^n)|_{t=0}=(u^n_0,b^n_0),
\end{cases}\end{align}
and $(u,b)\in \mathcal{C}([0,T];H^s(\R^d))$ be the solution of \eqref{ideal-MHD} with initial data $(u_0,b_0)$. Moreover, there holds
\bbal
\lim_{n\rightarrow \infty}||u^n-u||_{L^\infty_T(H^s(\R^d))}=0.
\end{align*}
\end{theo}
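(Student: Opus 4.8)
The plan is to deduce the strong $H^s$ convergence from three ingredients: uniform-in-$n$ bounds, convergence in a weaker topology with explicit smallness, and a uniform control of the high-frequency tails of the family $(u^n,b^n)$. Throughout I compare $u^n$ directly with $u$, which avoids invoking continuous dependence for the ideal system separately.

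First I would record the $(\mu_n,\nu_n)$-independent a priori estimates. Applying $\Lambda^s=(1-\De)^{s/2}$ to \eqref{revist1-MHD}, testing against $(\Lambda^su^n,\Lambda^sb^n)$, and using that $u^n,b^n$ are divergence free, the transport terms $u^n\cd\na$ drop out and the two magnetic interactions cancel at top order since $\int b^n\cd\na(\Lambda^sb^n\cd\Lambda^su^n)=0$, while the dissipative terms $-\mu_n\De u^n,-\nu_n\De b^n$ produce the nonnegative quantities $\mu_n\|\na u^n\|_{H^s}^2+\nu_n\|\na b^n\|_{H^s}^2$ on the good side. Standard commutator and product estimates (valid since $s>1+\tfrac d2$) then give $\frac{d}{dt}(\|u^n\|_{H^s}^2+\|b^n\|_{H^s}^2)\les(\|u^n\|_{H^s}^2+\|b^n\|_{H^s}^2)^{3/2}$ with constants independent of $\mu_n,\nu_n$. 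As $u^n_0\to u_0$, $b^n_0\to b_0$ in $H^s$, the data are uniformly bounded, so there exist $T>0$ and $C>0$ independent of $n$ with $\|(u^n,b^n)\|_{L^\infty_T(H^s)}\le C$; the same estimate with $\mu=\nu=0$ produces $(u,b)$ solving \eqref{ideal-MHD} on $[0,T]$.

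Next I would prove convergence in $L^2$. Writing $w=u^n-u$, $q=b^n-b$, the differences solve a system of the above type forced by $\mu_n\De u^n$, $\nu_n\De b^n$. Testing against $(w,q)$ in $L^2$, the divergence-free structure kills $\int u^n\cd\na w\cd w$ and $\int u^n\cd\na q\cd q$ and makes the dangerous magnetic terms vanish, $\int(b^n\cd\na q)\cd w+\int(b^n\cd\na w)\cd q=\int b^n\cd\na(q\cd w)=0$, while the remaining contributions are controlled by $(\|\na u\|_{L^\infty}+\|\na b\|_{L^\infty})(\|w\|_{L^2}^2+\|q\|_{L^2}^2)$. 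Because $s>2$, one has $\|\De u^n\|_{L^2}+\|\De b^n\|_{L^2}\le\|u^n\|_{H^2}+\|b^n\|_{H^2}\le C$, so the forcing contributes only $C(\mu_n+\nu_n)$. The Gronwall inequality together with $\|w(0)\|_{L^2}+\|q(0)\|_{L^2}=\|u^n_0-u_0\|_{L^2}+\|b^n_0-b_0\|_{L^2}\to0$ and $\mu_n+\nu_n\to0$ yields $\|w\|_{L^\infty_T(L^2)}+\|q\|_{L^\infty_T(L^2)}\to0$. Interpolating against the uniform $H^s$ bound then gives $\|u^n-u\|_{L^\infty_T(H^{s'})}+\|b^n-b\|_{L^\infty_T(H^{s'})}\to0$ for every $s'<s$.

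Finally comes the crux, upgrading to the endpoint $s'=s$. I would split $u^n-u=S_N(u^n-u)+(\mathrm{Id}-S_N)(u^n-u)$ with a Littlewood--Paley cutoff $S_N$. The low-frequency part obeys $\|S_N(u^n-u)\|_{H^s}\les2^N\|u^n-u\|_{H^{s-1}}$, which tends to $0$ for each fixed $N$ by the previous step. For the high-frequency part I would establish the uniform tightness $\lim_{N\to\infty}\sup_n\|(\mathrm{Id}-S_N)u^n\|_{L^\infty_T(H^s)}=0$, and likewise for $b^n$; since $u,b\in H^s$ automatically have small tails this controls $\|(\mathrm{Id}-S_N)(u^n-u)\|_{H^s}$ uniformly. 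To prove the tightness I would run the $H^s$ energy estimate on the blocks $\Delta_ju^n,\Delta_jb^n$ with $j>N$: the dissipative terms keep their good sign, the top-order transport and magnetic pieces cancel as before, and the surviving commutators $[\Delta_j,u^n\cd\na]u^n$ and their analogues, summed against $2^{2js}$ over $j>N$, are estimated so that their source is essentially the high-frequency part of $(u^n,b^n)$ itself. This closes a differential inequality $\frac{d}{dt}X_N^2\les C\,X_N^2+r_N$ for $X_N(t)^2=\sum_{j>N}2^{2js}(\|\Delta_ju^n\|_{L^2}^2+\|\Delta_jb^n\|_{L^2}^2)$, with $C$ uniform and $r_N\to0$ as $N\to\infty$ uniformly in $n$; Gronwall then bounds $X_N(t)$ by $X_N(0)=\|(\mathrm{Id}-S_N)(u^n_0,b^n_0)\|_{H^s}$ plus a vanishing term, and the $H^s$-convergent data have equi-small tails. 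The main obstacle is precisely this last step: arranging the commutator bookkeeping so that the forcing of the inequality is governed by the high-frequency energy $X_N$ rather than by the full $H^s$ norm (which would merely reprove the uniform bound), so that the smallness of the initial tails propagates uniformly in both $n$ and the vanishing parameters $\mu_n,\nu_n$.
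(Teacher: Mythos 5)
Your steps 1--3 (uniform-in-$n$ lifespan and $H^s$ bounds, $L^2$ convergence of the differences by Gronwall using the vanishing dissipation, interpolation up to $H^{s'}$ for every $s'<s$) are correct and essentially routine. The theorem, however, lives or dies with your step 4, the uniform tightness $\lim_{N\to\infty}\sup_n\|(\mathrm{Id}-S_N)(u^n,b^n)\|_{L^\infty_T(H^s)}=0$, and that step is not actually proved: you describe what the commutator bookkeeping ``should'' deliver and you flag it yourself as the main obstacle. Moreover, the inequality you postulate, $\frac{d}{dt}X_N^2\lesssim CX_N^2+r_N$ with $r_N\to 0$ uniformly in $n$, is circular as stated. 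When you localize the energy estimate to blocks $j>N$, the paraproduct and remainder pieces of $[\Delta_j,u^n\cdot\nabla]u^n$ (and of the magnetic commutators) force $\Delta_j$ through frequencies of $(u^n,b^n)$ of size at least $2^{N-C_0}$ for some fixed $C_0$; the honest outcome is $\frac{d}{dt}X_N^2\lesssim \|(u^n,b^n)\|_{C^{0,1}}\,X_{N-C_0}^2$, so the ``source'' $r_N$ is itself a near-tail energy of the solution, whose uniform-in-$n$ smallness is exactly what is being proved. The gap is repairable --- for instance, iterate the integrated inequality $m$ times across frequency bands and use that $\bigl(\int_0^T\|(u^n,b^n)\|_{C^{0,1}}\,\mathrm{d}\tau\bigr)^m/m!\to 0$ uniformly in $n$, together with the equi-smallness of the data tails coming from $u^n_0\to u_0$, $b^n_0\to b_0$ in $H^s$; or propagate frequency envelopes --- but this frequency-leakage argument is the real content of the endpoint convergence, and it is missing.

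For comparison, the paper sidesteps tail energy estimates entirely by a Bona--Smith-type scheme: it introduces auxiliary solutions $(u^n_j,b^n_j)$ of the viscous system emanating from the mollified data $S_j(u^n_0,b^n_0)$, and relies on two stability lemmas (Lemmas \ref{le-1} and \ref{le-2}). The first, applied at the levels $H^{s-1}$ and $H^s$, gives $\|u^n_j-u^n\|_{H^s}^2+\|b^n_j-b^n\|_{H^s}^2\lesssim\|(\mathrm{Id}-S_j)u^n_0\|^2_{H^s}+\|(\mathrm{Id}-S_j)b^n_0\|^2_{H^s}$, where the dangerous term carrying $\|(u^n_j,b^n_j)\|^2_{H^{s+1}}\lesssim 2^{2j}$ is compensated by the $H^{s-1}$ smallness $\lesssim 2^{-2j}\|(\mathrm{Id}-S_j)(u^n_0,b^n_0)\|_{H^s}^2$ of the difference; the second compares viscous and ideal solutions with smooth data and yields $\|u^n_j-u_j\|^2_{H^s}+\|b^n_j-b_j\|^2_{H^s}\lesssim 2^{4j}(\|u^n_0-u_0\|^2_{H^s}+\|b^n_0-b_0\|^2_{H^s}+\mu_n^2+\nu_n^2)$. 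Choosing $j$ large first and then $n$ large concludes. In effect, the uniform tail control you are after is obtained there as a corollary of stability with respect to mollification of the data, which is exactly the device your direct approach lacks.
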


If the viscosity coefficient is equal to the magnetics diffusive coefficient, that is $\nu=\mu$, then the system \eqref{MHD1} becomes
\beq\label{MHD2}\begin{cases}
\partial_tu+u\cdot\nabla u-\mu\De u+\nabla P=b\cdot \nabla b, \\
\partial_tb+u\cdot \nabla b-\mu\De b=b\cdot\nabla u,\\
\mathrm{div} u=\mathrm{div} b=0,\quad (u,b)|_{t=0}=(u_0,b_0).
\end{cases}\eeq
Then, we have a generalized conclusion as follows:
\begin{theo}\label{th2}
Let $d\geq 2$. Assume that $(s,p,r)$ satisfies
\begin{align}\label{eq:spr}
s>\frac{d}{p}+1, p\in [1,\infty], r\in (1,\infty) \quad \mathrm{or} \quad
s=\frac{d}{p}+1, p\in [1,\infty], r=1.
\end{align}
Suppose that $u^n_0\in B^s_{p,r}(\R^d)$ goes to $u_0\in B^s_{p,r}(\R^d)$ in $B^s_{p,r}(\R^d)$ when $n$ goes to infinity.  Let $\mu_n\geq0$ and $\mu_n$ goes to 0 when $n$ goes to $\infty$. If $\D u_0=\D b_0=\D u^n_0=\D b^n_0=0$, then there exists a positive $T>0$ independent of $n$ such that  $(u^n,b^n)\in \mathcal{C}([0,T];B^s_{p,r}(\R^d))$ be the solution of
\bal\label{revist2-MHD}\begin{cases}
\partial_tu^n+u^n\cdot\nabla u^n-\mu_n\De u^n+\nabla P_n=b^n\cdot \nabla b^n, \\
\partial_tb^n+u^n\cdot \nabla b^n-\mu_n\De b^n=b^n\cdot\nabla u^n,\\
\mathrm{div} u^n=\mathrm{div} b^n=0,\quad (u^n,b^n)|_{t=0}=(u^n_0,b^n_0),
\end{cases}\end{align}
and $(u,b)\in \mathcal{C}([0,T];B^s_{p,r}(\R^d))$ be the solution of \eqref{ideal-MHD} with initial data $(u_0,b_0)$. Moreover, there holds
\bbal
\lim_{n\rightarrow \infty}||u^n-u||_{L^\infty_T(B^s_{p,r}(\R^d))}=0.
\end{align*}
\end{theo}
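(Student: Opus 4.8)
The plan is to exploit the special structure of \eqref{revist2-MHD} created by the single-viscosity assumption $\nu=\mu$. Introducing the Elsässer variables $Z^\pm = u \pm b$ (and $Z^{n,\pm}=u^n\pm b^n$), the system \eqref{revist2-MHD} decouples into two symmetric transport--diffusion equations
\bal
\p_t Z^{n,\pm} + Z^{n,\mp}\cd\nabla Z^{n,\pm} - \mu_n \De Z^{n,\pm} + \nabla P_n = 0, \qquad \D Z^{n,\pm}=0,
\end{align}
and likewise \eqref{ideal-MHD} becomes the same system with $\mu_n=0$. This reformulation is precisely what renders the matched-viscosity case tractable in the $B^s_{p,r}$ scale: each $Z^{n,\pm}$ obeys a Navier--Stokes-type equation transported by the companion field, to which the Besov commutator calculus of Bahouri--Chemin--Danchin applies. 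Since $\|(u^n,b^n)\|_{B^s_{p,r}}\simeq \|(Z^{n,+},Z^{n,-})\|_{B^s_{p,r}}$, it suffices to prove the convergence in the Elsässer variables.

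First I would establish the two ingredients that make the limit meaningful: a common existence time and uniform bounds. Applying the standard $B^s_{p,r}$ a priori estimate for transport--diffusion equations (using the Leray projector to remove $\nabla P_n$, together with the product and commutator estimates valid under \eqref{eq:spr}), and crucially noting that the dissipation $-\mu_n\De$ contributes with a favorable sign, I obtain a time $T>0$ and a constant $C$, both independent of $n$, with $\|(u^n,b^n)\|_{L^\infty_T(B^s_{p,r})}\le C$; the same estimate with $\mu_n=0$ furnishes the ideal solution $(u,b)$ on $[0,T]$. The hypothesis $u^n_0\to u_0$ in $B^s_{p,r}$ feeds in here to make the bound genuinely uniform in $n$.

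Next comes the convergence itself, which I would carry out in two stages. The difference $w^n=u^n-u$ (and its magnetic counterpart) satisfies a transport--diffusion equation forced by $-w^n\cd\nabla u$, the analogous magnetic bilinear terms, and the error $\mu_n\De u$ coming from the absent dissipation in \eqref{ideal-MHD}. The term $\mu_n\De u$ cannot be controlled in $B^s_{p,r}$, since it would cost two derivatives on $u$, which only lies in $B^s_{p,r}$, so a direct $B^s_{p,r}$ estimate is hopeless. Instead I would first prove convergence in the weaker norm $B^{s-2}_{p,r}$, where $\mu_n\|\De u\|_{B^{s-2}_{p,r}}\le \mu_n\|u\|_{B^s_{p,r}}\to0$ and the data difference also tends to zero, closing a Grönwall estimate. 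To upgrade to the endpoint topology $B^s_{p,r}$, I would split with a Littlewood--Paley cutoff $S_N$: the low-frequency part obeys $\|S_N w^n\|_{B^s_{p,r}}\les 2^{2N}\|w^n\|_{B^{s-2}_{p,r}}$ by Bernstein, which vanishes as $n\to\infty$ for each fixed $N$, while the high-frequency part is bounded by $\|(I-S_N)u^n\|_{B^s_{p,r}}+\|(I-S_N)u\|_{B^s_{p,r}}$.

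The hard part will be this last term: proving that the high-frequency tails are small uniformly in $n$, i.e. $\sup_n\|(I-S_N)(u^n,b^n)\|_{L^\infty_T(B^s_{p,r})}\to0$ as $N\to\infty$. The tail of the fixed limit $(u,b)$ vanishes automatically, and convergence of the data in $B^s_{p,r}$ makes the tails of $\{u^n_0\}$ uniformly small, $\de_N:=\sup_n\|(I-S_N)u^n_0\|_{B^s_{p,r}}\to0$, but one must propagate this smallness along the flow. I would apply $(I-S_N)$ to the Elsässer equations and derive, via commutator estimates, a differential inequality of the form $\frac{d}{dt}\|(I-S_N)Z^{n,\pm}\|_{B^s_{p,r}}\les \|(u^n,b^n)\|_{B^s_{p,r}}\,\|(I-S_N)(u^n,b^n)\|_{B^s_{p,r}}+R^n_N$, where the commutator remainder $R^n_N$ must be shown to tend to $0$ as $N\to\infty$ uniformly in $n$ using only the uniform bound of the second step; Grönwall together with $\de_N\to0$ then closes the argument. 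Controlling $R^n_N$ uniformly — decoupling the genuinely high-frequency output of the bilinear term from the bounded low-frequency bulk — is the technical crux on which the whole same-topology statement rests.
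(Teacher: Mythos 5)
Your first two steps --- the Els\"asser symmetrization $Z^{n,\pm}=u^n\pm b^n$ and the uniform-in-$n$ existence time and bounds --- coincide with the paper's setup (the paper performs the same symmetrization inside the proofs of Lemmas \ref{le-3} and \ref{le-4}). After that, the two arguments genuinely diverge. The paper runs a Bona--Smith-type scheme: it introduces auxiliary solutions $(u^n_j,b^n_j)$ and $(u_j,b_j)$ issued from the mollified data $S_j(u^n_0,b^n_0)$ and $S_j(u_0,b_0)$; it compares $u^n$ with $u^n_j$ (and $u$ with $u_j$) via the same-system stability of Lemma \ref{le-3}, where the $B^{s-1}_{p,r}$ difference estimate is injected into the $B^{s}_{p,r}$ one so that the one-derivative loss falls on the smooth solution ($\|u^n_j\|_{B^{s+1}_{p,r}}\les 2^{j}$); and it compares $u^n_j$ with $u_j$ via Lemma \ref{le-4}, where the viscous error costs $\mu_n\|u_j\|_{B^{s+2}_{p,r}}\les\mu_n2^{2j}$, which for fixed $j$ is beaten by $\mu_n\to0$ and $\|u^n_0-u_0\|_{B^s_{p,r}}\to0$. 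You instead propose (i) convergence of the true difference in a weak norm and (ii) uniform-in-$n$ decay of the high-frequency tails of the solutions themselves. This is a legitimate alternative strategy in principle, but as written it has two genuine gaps.

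The main gap is that your closure of step (ii) is circular. Applying $\mathrm{Id}-S_N$ (or $\De_j$ with $j\geq N$) to the bilinear terms moves frequencies only by fixed factors, so what the commutator analysis actually produces is not a remainder that is small by the uniform bound, but an estimate of the form $R^n_N\les M\,\|(\mathrm{Id}-S_{N-N_0})(u^n,b^n)\|_{B^s_{p,r}}$, with $M$ the uniform bound and $N_0$ a fixed integer: the remainder is itself of the size of the solution tails you are trying to control, merely at a slightly lower cutoff. Hence your statement that $R^n_N$ can be shown to vanish ``using only the uniform bound of the second step'' is not correct, and Gr\"onwall plus $\de_N\to0$ does not close the argument. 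One needs an additional device --- a slowly varying frequency envelope, or an iteration in $N$ of the inequality $X_N(t)\leq\de_N+CM\int_0^tX_{N-N_0}\,\dd\tau$ (where $X_N$ denotes the solution tail at cutoff $N$), iterated $k$ times to produce a factor $(CMT)^k/k!$ before letting $N\to\infty$. This missing machinery is precisely what the paper's mollified-data scheme avoids: there, frequency-tail smallness is used only on the initial data, and what is propagated along the flow is instead the smoothness of the auxiliary solutions, with quantified growth $2^j$, $2^{2j}$.

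The second gap is the choice of weak norm. In $B^{s-2}_{p,r}$ the estimates you need are not available in the whole range \eqref{eq:spr}: the product estimate $\|w\cd\na u\|_{B^{s-2}_{p,r}}\les\|w\|_{B^{s-2}_{p,r}}\|u\|_{B^{s-1}_{p,r}}$ (Lemma \ref{le3.1} one level down) would require $(s-1,p,r)$ to satisfy \eqref{eq:spr}, i.e. $s>\frac dp+2$, and the commutator estimate of Lemma \ref{le2} requires $\sigma>-d\min\{1-\frac1p,\frac1p\}$, which fails for $\sigma=s-2$ when $p=\infty$ and $s\leq2$. In particular, at the admissible endpoint $p=\infty$, $s=1$, $r=1$, your stage (i) would have to run in $B^{-1}_{\infty,1}$, where for instance the remainder term in Bony's decomposition of $w\cd\na u$ has total regularity $\leq0$ and is out of control even after exploiting $\D w=0$. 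This is exactly why the paper takes $B^{s-1}_{p,r}$ as the weak norm --- it stays inside \eqref{eq:spr}, so Lemma \ref{lem:P} applies --- and accepts that $\mu_n\De u$ then costs a $B^{s+1}_{p,r}$ norm, a price payable only for the mollified auxiliary solutions.
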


\begin{rema}
We prove local well-posedness in $B^s_{p,r}$ where $(s,p,r)$ satisfies \eqref{eq:spr}, improving the previous result in \cite{M.Y}.
\end{rema}

\noindent\textbf{Notations.} Given a Banach space $X$, we denote its norm by $\|\cdot\|_{X}$. Since all spaces of functions are over $\mathbb{R}^d$, for simplicity, we drop  $\mathbb{R}^d$ in our notations of function spaces if there is no ambiguity. The symbol $A\lesssim B$ denotes that there exists a constant $\bar{c}_0>0$ independent of $A$ and $B$, such that $A\leq \bar{c}_0 B$. The symbol $A\simeq B$ represents $A\lesssim B$ and $B\lesssim A$.

\section{Preliminaries}

In this section we collect some preliminary definitions and lemmas. For more details we refer the readers to \cite{B.C.D}.

Let $\chi: {\mathbb R}^d\to [0, 1]$ be a radial, non-negative,
smooth and radially decreasing function which is supported in $\mathcal{B}\triangleq \{\xi:|\xi|\leq \frac43\}$ and
$\chi\equiv 1$ for $|\xi|\leq \frac54$. Let $\varphi(\xi)=\chi(\frac{\xi}{2})-\chi(\xi)$. Then $\varphi$ is supported in the ring $\mathcal{C}\triangleq \{\xi\in\mathbb{R}^d:\frac 3 4\leq|\xi|\leq \frac 8 3\}$.
For $u \in \mathcal{S}'$, $q\in {\mathbb Z}$, we define the Littlewood-Paley operators: $\dot{\Delta}_q{u}=\mathcal{F}^{-1}(\varphi(2^{-q}\cdot)\mathcal{F}u)$, ${\Delta}_q{u}=\dot{\Delta}_q{u}$ for $q\geq 0$, ${\Delta}_q{u}=0$ for $q\leq -2$ and $\Delta_{-1}u=\mathcal{F}^{-1}(\chi \mathcal{F}u)$, and $S_q{u}=\mathcal{F}^{-1}\big(\chi(2^{-q}\xi)\mathcal{F}u\big)$.
Here we use ${\mathcal{F}}(f)$ or $\widehat{f}$ to denote
the Fourier transform of $f$.

We define the standard vector-valued Besov spaces $B^s_{p,r}$ and $\dot B^s_{p,r}$ of the functions $u:{\mathbb R}^d\to {\mathbb R}^d$ with finite norms which are defined by
\begin{align*}
\|u\|_{B^s_{p,r}}&\triangleq \big|\big|(2^{js}\|\Delta_j{u}\|_{L^p})_{j\in {\mathbb Z}}\big|\big|_{\ell^r},\\
\|u\|_{\dot{B}^s_{p,r}}&\triangleq \big|\big|(2^{js}\|\dot{\Delta}_j{u}\|_{L^p})_{j\in {\mathbb Z}}\big|\big|_{\ell^r}.
\end{align*}

\begin{rema}\label{re3}
Let $s\geq0$ and $1\leq p,r\leq\infty$. There holds
\[||(\mathrm{Id}-\De_{-1})u||_{\dot{B}^s_{p,r}}\simeq||(\mathrm{Id}-\De_{-1})u||_{B^s_{p,r}};\]
\[||u||_{\dot{B}^s_{p,r}}\lesssim ||u||_{B^s_{p,r}}, \quad \mathrm{if} \quad s>0; \quad ||u||_{L^p}\lesssim ||u||_{B^0_{p,1}}\lesssim||u||_{\dot{B}^0_{p,1}}.\]
\end{rema}

Next we recall nonhomogeneous Bony's decomposition from \cite{B.C.D}.
$$uv=T_uv+T_vu+R(u,v),$$
with
\[T_uv\triangleq \sum\limits_j S_{j-1}u\Delta_j v, \quad R(u,v)\triangleq \sum_{j}\sum\limits_{|k-j|\leq1}\Delta_j u \Delta_k v.\]
This is now a standard tool for nonlinear estimates. Now we use Bony's decomposition to prove some nonlinear estimates which will be used for the estimate of pressure term.

\begin{rema}
Note that $S_{j-1}u\Delta_j v$ is supported in the $2^j\mathcal{\tilde{C}}$ where the annulus $\mathcal{\tilde{C}}\triangleq \{\xi\in\mathbb{R}^d:\frac{1}{12}\leq|\xi|\leq \frac{12}{3}\}$.
\end{rema}

\begin{rema}\label{re5}
Let $f$ be a smooth function on $\R^d\backslash \{0\}$ which satisfies $f(\lambda\xi)=\lambda^mf(\xi)$. Then, we have
\[||f(D)u||_{\dot{B}^{s-m}_{p,r}}\leq C||u||_{\dot{B}^s_{p,r}},\]
and
\[||f(D)T_uv||_{B^{s-m}_{p,r}}\leq C||T_uv||_{B^s_{p,r}}.\]
\end{rema}

The main properties of the paraproduct and remainder are described below.
\begin{lemm}\label{le2.1}
Let $s\in\mathbb{R}$ and $1\leq p,r\leq\infty$. Then there exists a constant $C$, depending only on $d,p,r,s$, such that
$$ \|T_uv\|_{B^{s}_{p,r}}\leq C||u||_{L^\infty}||v||_{B^s_{p,r}}.$$
Let $s>0$ and $1\leq p,r\leq\infty$. Then there exists a constant $C$, depending only on $d,p,r,s$, such that
\[\|R(u,v)\|_{B^{s}_{p,r}}\leq C\|u\|_{L^\infty}\|v\|_{B^{s}_{p,r}},\]
or
\[\|R(u,v)\|_{B^{s}_{p,r}}\leq C\|u\|_{B^{-1}_{\infty,r}}\|v\|_{B^{s+1}_{p,\infty}}.\]
Let $s>1$ and $1\leq p,r\leq\infty$. Then there exists a constant $C$, depending only on $d,p,r,s$, such that
\[\|R(u,v)\|_{B^{s}_{p,r}}\leq  C(||u||_{L^\infty}+||\na u||_{L^\infty})\|v\|_{B^{s-1}_{p,r}}.\]
\end{lemm}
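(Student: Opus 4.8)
The plan is to establish all four inequalities by one and the same dyadic scheme: localise each building block of Bony's decomposition in frequency, use the support information recorded in the Remarks above to collapse the series defining $\Delta_q(T_uv)$ or $\Delta_q R(u,v)$ either to a finite window of indices or to a one-sided range, estimate each block by H\"older's inequality, and recombine with Young's inequality for series. The only thing that changes from one bound to the next is how the available regularity of $u$ and $v$ is distributed across the dyadic weights, so I would organise the proof around that single distinction.

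For the paraproduct estimate I would invoke the Remark stating that $S_{j-1}u\,\Delta_j v$ is spectrally supported in the annulus $2^j\tilde{\mathcal{C}}$; this forces $\Delta_q(S_{j-1}u\,\Delta_j v)=0$ unless $|j-q|\leq N_0$ for a fixed integer $N_0$, whence $\Delta_q T_uv=\sum_{|j-q|\leq N_0}\Delta_q(S_{j-1}u\,\Delta_j v)$. Since $S_{j-1}$ and $\Delta_q$ are convolutions with uniformly $L^1$-bounded kernels, $\|\Delta_q T_uv\|_{L^p}\leq C\|u\|_{L^\infty}\sum_{|j-q|\leq N_0}\|\Delta_j v\|_{L^p}$; multiplying by $2^{qs}$, absorbing $2^{qs}\leq C2^{js}$ on the finite window, and taking the $\ell^r$ norm in $q$ gives the claim for \emph{every} $s\in\R$, precisely because the convolution kernel has finite support and no summability constraint on $s$ is needed.

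For the three remainder bounds I would instead use that $\Delta_j u\,\Delta_k v$ with $|k-j|\leq 1$ is supported in a ball of radius $\sim 2^j$, so that $\Delta_q R(u,v)=\sum_{j\geq q-N_0'}\Delta_q R_j$ with $R_j=\sum_{|k-j|\leq1}\Delta_j u\,\Delta_k v$. Writing $2^{qs}\|\Delta_q R_j\|_{L^p}=2^{(q-j)s}\,[\,2^{js}\|\Delta_q R_j\|_{L^p}\,]$ turns the $j$-summation into a discrete convolution against the one-sided kernel $2^{(q-j)s}\mathbf{1}_{j\geq q-N_0'}$, which lies in $\ell^1$ exactly when $s>0$; this is where positivity of $s$ enters. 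The first two bounds then differ only in how $\|R_j\|_{L^p}\leq C\|\Delta_j u\|_{L^\infty}\sum_{|k-j|\leq1}\|\Delta_k v\|_{L^p}$ is processed afterwards: for the first I keep $\|\Delta_j u\|_{L^\infty}\leq C\|u\|_{L^\infty}$ and bound the convolution by $\|u\|_{L^\infty}\|v\|_{B^s_{p,r}}$; for the second I split the weight as $2^{js}\|\Delta_j u\|_{L^\infty}\|\Delta_j v\|_{L^p}=(2^{-j}\|\Delta_j u\|_{L^\infty})(2^{j(s+1)}\|\Delta_j v\|_{L^p})$, estimating the first factor in $\ell^r$ by $\|u\|_{B^{-1}_{\infty,r}}$ and the second uniformly in $j$ by $\|v\|_{B^{s+1}_{p,\infty}}$.

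The subtlest of the four is the last bound, where one derivative must be gained on $v$. Here I would apply Bernstein's inequality to the spectrally annular blocks of $u$: for $j\geq 0$, $\|\Delta_j u\|_{L^\infty}\leq C2^{-j}\|\na\Delta_j u\|_{L^\infty}\leq C2^{-j}\|\na u\|_{L^\infty}$, while the single low-frequency block satisfies $\|\Delta_{-1}u\|_{L^\infty}\leq C\|u\|_{L^\infty}$, so that altogether $\|\Delta_j u\|_{L^\infty}\leq C2^{-j}(\|u\|_{L^\infty}+\|\na u\|_{L^\infty})$. Substituting this into $R_j$ converts the dyadic weight into $2^{j(s-1)}\|\Delta_j v\|_{L^p}$, whose $\ell^r$ norm is $\|v\|_{B^{s-1}_{p,r}}$, after which the same one-sided convolution estimate closes the bound under the stated hypothesis (which in particular secures $s>0$). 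I expect this Bernstein gain and the careful tracking of frequency supports and dyadic exponents to be the only points demanding real care; none of the steps carries a conceptual obstacle, the whole difficulty being the precise bookkeeping of the weights.
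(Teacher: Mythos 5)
Your proof is correct, but there is nothing in the paper to match it against: the paper never proves Lemma \ref{le2.1} --- it is quoted as a package of standard paraproduct/remainder estimates taken from \cite{B.C.D}. What you have written is essentially the canonical argument from that reference: localise each Bony block in frequency (annular support $2^{j}\tilde{\mathcal{C}}$ for $S_{j-1}u\,\Delta_j v$, ball support of radius $\sim 2^{j}$ for $\Delta_j u\,\Delta_k v$ with $|k-j|\leq 1$), apply H\"older on each block, and close with Young's inequality for the discrete convolution --- a two-sided finite window for $T_uv$ (hence no restriction on $s$), a one-sided window for $R(u,v)$ (hence the kernel $2^{ms}$, $m\leq N_0'$, is summable exactly when $s>0$). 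Your distribution of the dyadic weights in the three remainder bounds is the right one, including the reverse-Bernstein step $\|\Delta_j u\|_{L^\infty}\lesssim 2^{-j}\|\nabla u\|_{L^\infty}$ for $j\geq 0$ (valid because $\Delta_j u$ has annular spectrum) together with the separate treatment of $\Delta_{-1}u$; note that this argument actually yields the last estimate under the weaker hypothesis $s>0$, so the lemma's assumption $s>1$ is merely a harmless overstatement, as you observed. One small caveat worth a sentence in a fully written proof: in the nonhomogeneous calculus the annular-support claim of the paper's Remark fails for the finitely many low-frequency blocks ($j\leq 0$), whose spectra lie in balls rather than annuli; these finitely many terms are absorbed by the same one-sided argument and do not affect any of the four conclusions.
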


\begin{lemm}
Let $s>0$ and $1\leq p,r\leq\infty$. Then there exists a constant $C$, depending only on $d,p,r,s$, such that
\[||uv||_{B^s_{p,r}}\leq C\big(||u||_{L^\infty}||v||_{B^s_{p,r}}+||v||_{L^\infty}||u||_{B^s_{p,r}}\big).\]
\end{lemm}

\begin{lemm}\label{le3.1}
Assume $(\sigma,p,r)$ satisfies \eqref{eq:spr}. Then there exists a constant $C$, depending only on $d,p,r,\sigma$, such that for all $u,f\in B^\sigma_{p,r}$ with $\Div u=0$,
\[||u\cdot \na f||_{B^{\sigma-1}_{p,r}}\leq C||u||_{B^{\sigma-1}_{p,r}}||f||_{B^\sigma_{p,r}}.\]
\end{lemm}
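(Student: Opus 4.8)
The plan is to use the divergence-free condition to rewrite the convection term in divergence form, so that the single derivative we are allowed to lose in passing from $B^\sigma_{p,r}$ to $B^{\sigma-1}_{p,r}$ can be managed block by block through Bony's decomposition. Since $\Div u=0$ we have $\pa_k(u_kf)=(\Div u)f+u\cd\na f=u\cd\na f$, hence
\[u\cd\na f=\sum_k\pa_k(u_kf),\qquad u_kf=T_{u_k}f+T_fu_k+R(u_k,f).\]
It therefore suffices to bound the three pieces of $\pa_k(u_kf)$ in $B^{\sigma-1}_{p,r}$. Throughout I will use the embedding $B^{\sigma-1}_{p,r}\hookrightarrow L^\infty$, which is exactly what \eqref{eq:spr} guarantees (either $\sigma-1>\frac dp$, or $\sigma-1=\frac dp$ with $r=1$), together with the paraproduct and remainder bounds of Lemma \ref{le2.1}.

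The two \emph{easy} pieces are $\sum_k\pa_kT_{u_k}f$ and $\sum_k\pa_kR(u_k,f)$. For the first, Remark \ref{re5} applied to the degree-one homogeneous symbol $\xi\mapsto i\xi_k$ gives $\|\pa_kT_{u_k}f\|_{B^{\sigma-1}_{p,r}}\les\|T_{u_k}f\|_{B^\sigma_{p,r}}\les\|u_k\|_{L^\infty}\|f\|_{B^\sigma_{p,r}}$, and $\|u\|_{L^\infty}\les\|u\|_{B^{\sigma-1}_{p,r}}$ closes it. For the second, since $\pa_k$ maps $B^\sigma_{p,r}$ continuously into $B^{\sigma-1}_{p,r}$ we have $\|\pa_kR(u_k,f)\|_{B^{\sigma-1}_{p,r}}\les\|R(u_k,f)\|_{B^\sigma_{p,r}}$, and because $\sigma\ge1>0$ the first remainder estimate of Lemma \ref{le2.1} yields $\|R(u_k,f)\|_{B^\sigma_{p,r}}\les\|u_k\|_{L^\infty}\|f\|_{B^\sigma_{p,r}}$. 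It is essential here to keep $\pa_k$ \emph{outside} the remainder: attacking $u\cd\na f$ directly would force a remainder at regularity $\sigma-1$, whose estimate may fail in the borderline case $\sigma=\frac dp+1$, $p=\infty$, $r=1$ (where $\sigma-1=0$).

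The decisive term is $\sum_k\pa_kT_fu_k$, and this is where $\Div u=0$ enters crucially. Applying the Leibniz rule block by block, and using that $\pa_k$ commutes with the Fourier multipliers $S_{j-1}$ and $\De_j$,
\[\pa_kT_fu_k=T_{\pa_kf}u_k+T_f\pa_ku_k.\]
Summing over $k$, the last term becomes $T_f\big(\sum_k\pa_ku_k\big)=T_f(\Div u)=0$, so that $\sum_k\pa_kT_fu_k=\sum_kT_{\pa_kf}u_k$. This surviving paraproduct is harmless: by Lemma \ref{le2.1},
\[\Big\|\sum_kT_{\pa_kf}u_k\Big\|_{B^{\sigma-1}_{p,r}}\les\sum_k\|\pa_kf\|_{L^\infty}\|u_k\|_{B^{\sigma-1}_{p,r}}\les\|\na f\|_{L^\infty}\|u\|_{B^{\sigma-1}_{p,r}}\les\|f\|_{B^\sigma_{p,r}}\|u\|_{B^{\sigma-1}_{p,r}},\]
the final step using $\na f\in B^{\sigma-1}_{p,r}\hookrightarrow L^\infty$. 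Collecting the three contributions gives the claim. The main obstacle is precisely this last term, where naively the derivative would fall on the low-regularity factor $u$; the divergence-free structure is exactly what cancels the offending piece $T_f(\Div u)$ and leaves a paraproduct carrying the derivative on $f$ instead.
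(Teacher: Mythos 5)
Your proof is correct and follows essentially the same route as the paper: after using $\Div u=0$, your three pieces $\sum_k\pa_kT_{u_k}f$, $\sum_kT_{\pa_kf}u_k$, $\sum_k\pa_kR(u_k,f)$ coincide (up to the vanishing term $T_{\Div u}f$) with the paper's decomposition $\sum_iT_{u^i}\p_if+\sum_iT_{\p_if}u^i+\sum_i\p_iR(u^i,f)$, and both arguments then close via the paraproduct/remainder bounds of Lemma \ref{le2.1} and the embedding $B^{\sigma-1}_{p,r}\hookrightarrow L^\infty$ guaranteed by \eqref{eq:spr}. Your side remark about keeping the derivative outside the remainder to survive the borderline case $\sigma-1=0$ is exactly the point of the paper's decomposition as well.
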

\begin{proof}
Due to $\Div u=0$, then we decompose the term $u\cdot \na f$ into
\[u\cdot \na f=\sum^d_{i=1}T_{u^i}\p_if+\sum^d_{i=1}T_{\p_if}u^i+\sum^d_{i=1}\p_iR(u^i,f).\]
According to Lemma \ref{le2.1}, we have
\[||T_{u^i}\p_if||_{B^{\sigma-1}_{p,r}}\leq C||u||_{L^\infty}||\na f||_{B^\sigma_{p,r}}\leq C||u||_{B^{\sigma-1}_{p,r}}||f||_{B^\sigma_{p,r}},\]
\[||T_{\p_if}u^i||_{B^{\sigma-1}_{p,r}}\leq C||\na f||_{L^\infty}||u||_{B^\sigma_{p,r}}\leq C||u||_{B^{\sigma-1}_{p,r}}||f||_{B^\sigma_{p,r}},\]
\begin{align*}
||\p_iR(u^i,f)||_{B^{\sigma-1}_{p,r}}&\leq C||R(u^i,f)||_{B^{\sigma}_{p,r}}\leq C||u||_{L^\infty}||f||_{B^\sigma_{p,r}}\\&\leq C||u||_{B^{\sigma-1}_{p,r}}||f||_{B^\sigma_{p,r}}.
\end{align*}
Combining this results, we complete the proof of this lemma.
\end{proof}

\begin{lemm}\label{le:P1}
Assume $(\sigma,p,r)$ satisfies \eqref{eq:spr}. Then there exists a constant $C$, depending only on $d,p,r,\sigma$, such that for all $u,v\in B^\sigma_{p,r}$ with $\Div u=\Div v=0$,
\begin{align*}
||\na(-\De)^{-1}\Div(u\cdot \na v)||_{B^\sigma_{p,r}}&\leq C \big((||u||_{L^\infty}+||\na u||_{L^\infty})||v||_{B^\sigma_{p,r}}
\prq +(||v||_{L^\infty}+||\na v||_{L^\infty})||u||_{B^\sigma_{p,r}}\big).
\end{align*}
\end{lemm}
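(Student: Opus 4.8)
The plan is to exploit the two divergence-free conditions to recast $\na(-\De)^{-1}\Div(u\cdot\na v)$ as a sum of \emph{zeroth}-order Fourier multipliers acting on paraproducts (for which the inhomogeneous multiplier bound of Remark \ref{re5} applies directly) plus a \emph{first}-order multiplier acting on a remainder term (which I must treat in homogeneous spaces). Since $\Div u=0$ I first write $u\cdot\na v=\Div(u\otimes v)$, so that the $k$-th component is $[\na(-\De)^{-1}\Div(u\cdot\na v)]^k=\sum_{i,j}\p_k(-\De)^{-1}\p_i\p_j(u^iv^j)$. I then apply Bony's decomposition $u^iv^j=T_{u^i}v^j+T_{v^j}u^i+R(u^i,v^j)$, splitting the estimate into three pieces $I$, $II$, $III$.

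For the paraproduct piece $I=\sum_{i,j}\p_k(-\De)^{-1}\p_i\p_j T_{u^i}v^j$, I would use $\Div v=0$ to move one derivative onto the low-frequency factor, namely $\sum_j\p_j(T_{u^i}v^j)=\sum_j T_{\p_ju^i}v^j+T_{u^i}(\Div v)=\sum_j T_{\p_ju^i}v^j$, so that $I=\sum_{i,j}\p_k(-\De)^{-1}\p_i(T_{\p_ju^i}v^j)$ carries only the zeroth-order multiplier $\p_k(-\De)^{-1}\p_i$ (whose symbol $-\xi_k\xi_i/|\xi|^2$ is smooth off the origin and homogeneous of degree $0$). Applying the paraproduct part of Remark \ref{re5} and then Lemma \ref{le2.1} yields $\|I\|_{B^\sigma_{p,r}}\le C\|\na u\|_{L^\infty}\|v\|_{B^\sigma_{p,r}}$. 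The piece $II$ is treated symmetrically, using $\Div u=0$ to write $\sum_i\p_i(T_{v^j}u^i)=\sum_i T_{\p_iv^j}u^i$, producing $\|II\|_{B^\sigma_{p,r}}\le C\|\na v\|_{L^\infty}\|u\|_{B^\sigma_{p,r}}$.

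The remainder piece $III=\sum_{i,j}\p_k(-\De)^{-1}\p_i\p_j R(u^i,v^j)$ is the main obstacle. Here the multiplier $\p_k(-\De)^{-1}\p_i\p_j$ is homogeneous of degree $1$, and since $R(u^i,v^j)$ is not a paraproduct I cannot use the inhomogeneous bound; instead I would invoke the homogeneous multiplier estimate of Remark \ref{re5} together with the homogeneous counterpart of the remainder estimate in Lemma \ref{le2.1} (with $s=\sigma+1>1$, legitimate since $\sigma\ge1>0$ under \eqref{eq:spr}) to get $\|III\|_{\dot B^\sigma_{p,r}}\le C\|R(u^i,v^j)\|_{\dot B^{\sigma+1}_{p,r}}\le C(\|u\|_{L^\infty}+\|\na u\|_{L^\infty})\|v\|_{\dot B^\sigma_{p,r}}$, and then $\|v\|_{\dot B^\sigma_{p,r}}\lesssim\|v\|_{B^\sigma_{p,r}}$ by Remark \ref{re3}. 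Since this controls only the homogeneous norm, I must finally recover the full $B^\sigma_{p,r}$ norm through $\|III\|_{B^\sigma_{p,r}}\lesssim\|\Delta_{-1}III\|_{L^p}+\|III\|_{\dot B^\sigma_{p,r}}$ (Remark \ref{re3}); the low-frequency block is handled by noting that on the ball $\{|\xi|\le\frac43\}$ the symbol $\chi(\xi)\,\xi_k\xi_i\xi_j/|\xi|^2$ is bounded and obeys Mikhlin-type bounds, whence $\Delta_{-1}\na(-\De)^{-1}\Div\Div$ maps $L^p\to L^p$ and $\|\Delta_{-1}III\|_{L^p}\lesssim\|u\otimes v\|_{L^p}\lesssim\|u\|_{L^\infty}\|v\|_{B^\sigma_{p,r}}$. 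Summing $I$, $II$, $III$ gives the asserted inequality. The only genuinely delicate point is this last homogeneous-to-inhomogeneous passage for the remainder: because Riesz-type operators are unbounded on inhomogeneous Besov spaces, the estimate cannot be closed purely in $B^\sigma_{p,r}$ and the separate low-frequency treatment is unavoidable.
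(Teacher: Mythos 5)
Your overall architecture is sound and is, at bottom, a reorganization of the paper's own proof: the paper splits $\na(-\De)^{-1}\Div(u\cdot \na v)$ into a high-frequency part (on which it uses $\Div u=\Div v=0$ to write $\Div(u\cdot\na v)=\sum_{i,j}\p_ju^i\,\p_iv^j$ and then applies Bony) and a low-frequency $L^p$ block (on which it uses $u\cdot\na v=\Div(u\otimes v)$), whereas you apply Bony to $u\otimes v$ first, use the divergence-free conditions to lower the multiplier to order zero on the two paraproduct pieces, and reserve the frequency surgery for the remainder alone. Your pieces $I$ and $II$ are correct as written: the paraproduct Leibniz identity $\sum_j\p_j(T_{u^i}v^j)=\sum_jT_{\p_ju^i}v^j$ is legitimate, and Remark \ref{re5} (with $m=0$) plus Lemma \ref{le2.1} give exactly the claimed bounds; this part is, if anything, tidier than the paper's treatment.

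The remainder piece, however, contains a genuine error as written. Your low-frequency bound reads $\|\De_{-1}III\|_{L^p}\les\|u\otimes v\|_{L^p}$, but $III$ is the multiplier applied to the remainders $R(u^i,v^j)$, not to the full product: $L^p$-boundedness of $\De_{-1}\na(-\De)^{-1}\Div\Div$ can only yield $\|\De_{-1}III\|_{L^p}\les\sum_{i,j}\|R(u^i,v^j)\|_{L^p}$, and there is no inequality of the form $\|R(u,v)\|_{L^p}\les\|uv\|_{L^p}$ (the remainder differs from the product by two paraproducts which $\|uv\|_{L^p}$ does not control). In addition, your appeal to Mikhlin-type bounds establishes $L^p$-boundedness only for $1<p<\infty$, while \eqref{eq:spr} permits $p\in\{1,\infty\}$; the boundedness does hold at the endpoints, but only because the symbol $\chi(\xi)\xi_k\xi_i\xi_j/|\xi|^2$ vanishes linearly at the origin, so that its dyadic pieces have geometrically summable $L^1$ kernels — an argument you would have to supply. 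Both defects disappear if you copy the paper's device for its term $\mathrm{II}$, i.e. replace the $L^p$ multiplier argument by the embedding into $\dot{B}^0_{p,1}$ of Remark \ref{re3} and the homogeneous multiplier estimate of Remark \ref{re5}:
\begin{align*}
\|\De_{-1}III\|_{L^p}\les \|III\|_{\dot{B}^0_{p,1}}\les \sum^d_{i,j=1}\|R(u^i,v^j)\|_{\dot{B}^1_{p,1}}\les \sum^d_{i,j=1}\|R(u^i,v^j)\|_{B^1_{p,1}}\les \|u\|_{L^\infty}\|v\|_{B^\sigma_{p,r}},
\end{align*}
the last two steps by Remark \ref{re3}, Lemma \ref{le2.1} (with $s=1$) and the embedding $B^\sigma_{p,r}\hookrightarrow B^1_{p,1}$ valid under \eqref{eq:spr}. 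A smaller point of the same kind: the ``homogeneous counterpart'' of the third estimate of Lemma \ref{le2.1} that you invoke for $\|R(u^i,v^j)\|_{\dot{B}^{\sigma+1}_{p,r}}$ is true but is not stated in the paper; the paper avoids needing it by inserting $(\mathrm{Id}-\De_{-1})$ and using Remark \ref{re3} to pass back to inhomogeneous norms before applying Lemma \ref{le2.1}, and you could do the same.
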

\begin{proof}
It is easy to get
\begin{align*}
& \quad ||\na(-\De)^{-1}\Div(u\cdot \na v)||_{B^\sigma_{p,r}}\\&\les  ||\na(-\De)^{-1}\Div(\mathrm{Id}-\De_{-1})(u\cdot \na v)||_{\dot{B}^\sigma_{p,r}}+||\na(-\De)^{-1}\Div(u\cdot \na v)||_{L^p}\pr
\les  ||\na(-\De)^{-1}(\mathrm{Id}-\De_{-1})(\na u\cdot \na v)||_{\dot{B}^\sigma_{p,r}}+||\na(-\De)^{-1}\Div\Div(u\otimes v)||_{L^p}
\triangleq \mathrm{I}+\mathrm{II}.
\end{align*}
According to Remark \ref{re3} and Lemma \ref{le2.1}, we have
\begin{align*}
\mathrm{I}&\lesssim ||\sum^d_{i,j=1}\na(-\De)^{-1}(\mathrm{Id}-\De_{-1})[T_{\p_j u^i}(\p_iv^j)+ T_{\p_i v^j}(\p_ju^i)]||_{\dot{B}^\sigma_{p,r}}\prq
+||\sum^d_{i,j=1}\na(-\De)^{-1}\p_i\p_j (\mathrm{Id}-\De_{-1})R(u^i,v^j)||_{\dot{B}^\sigma_{p,r}}\pr
\les \sum^d_{i,j=1}||(\mathrm{Id}-\De_{-1})[T_{\p_j u^i}(\p_iv^j)+ T_{\p_i v^j}(\p_ju^i)]||_{\dot{B}^{\sigma-1}_{p,r}}\prq
+\sum^d_{i,j=1}||(\mathrm{Id}-\De_{-1})R(u^i,v^j)||_{\dot{B}^{\sigma+1}_{p,r}} \pr
\les \sum^d_{i,j=1}||[T_{\p_j u^i}(\p_iv^j)+ T_{\p_i v^j}(\p_ju^i)]||_{B^{\sigma-1}_{p,r}}+\sum^d_{i,j=1}||R(u^i,v^j)||_{B^{\sigma+1}_{p,r}}
\pr \les||\na u||_{L^\infty}||v||_{B^\sigma_{p,r}}+(||v||_{L^\infty}+||\na v||_{L^\infty})||u||_{B^\sigma_{p,r}},
\end{align*}
and
\begin{align*}
\mathrm{II}&\les ||\na(-\De)^{-1}\Div \Div(u\otimes v)||_{\dot{B}^0_{p,1}}
\les ||u\otimes v||_{\dot{B}^1_{p,1}} \les ||u\otimes v||_{B^1_{p,1}}
\pr \les ||u\otimes v||_{B^\sigma_{p,r}}\les ||u||_{L^\infty}||v||_{B^\sigma_{p,r}}+||v||_{L^\infty}||u||_{B^\sigma_{p,r}}.
\end{align*}
This completes the proof of this lemma.
\end{proof}

\begin{lemm}\label{le:P2}
Assume $(\sigma,p,r)$ satisfies \eqref{eq:spr}. Then there exists a constant $C$, depending only on $d,p,r,\sigma$, such that for all $u,v\in B^\sigma_{p,r}$ with $\Div u=\Div v=0$,
\begin{align*}
||\na(-\De)^{-1}\Div(u\cdot \na v)||_{B^{\sigma-1}_{p,r}}&\leq C ||u||_{B^{\sigma-1}_{p,r}}||v||_{B^\sigma_{p,r}},
\end{align*}
or
\begin{align*}
||\na(-\De)^{-1}\Div(u\cdot \na v)||_{B^{\sigma-1}_{p,r}}&\leq C ||v||_{B^{\sigma-1}_{p,r}}||u||_{B^\sigma_{p,r}}.
\end{align*}
\end{lemm}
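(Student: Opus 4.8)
The plan is to treat $\Pi:=\na(-\De)^{-1}\Div$ as a Fourier multiplier homogeneous of degree $0$ and to reduce everything to the vector field $u\cdot\na v$, exploiting $\Div u=\Div v=0$. Exactly as in the proof of Lemma \ref{le3.1}, since $\Div u=0$ I would use Bony's decomposition
\[u\cdot\na v=\sum_{i=1}^d T_{u^i}\p_i v+\sum_{i=1}^d T_{\p_i v}u^i+\sum_{i=1}^d \p_i R(u^i,v),\]
and estimate $\Pi$ applied to each piece in $B^{\sigma-1}_{p,r}$. The guiding principle is that $u$ must never be asked for more than $\sigma-1$ derivatives, the extra derivative always being absorbed by $v\in B^\sigma_{p,r}$; this is what distinguishes the bound from the symmetric $B^\sigma\times B^\sigma$ estimate of Lemma \ref{le:P1}.

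For the two paraproduct terms I would apply the inhomogeneous paraproduct bound of Remark \ref{re5} (with $m=0$) to discard $\Pi$, and then Lemma \ref{le2.1}. Since $(\sigma,p,r)$ satisfies \eqref{eq:spr} one has the embeddings $B^{\sigma-1}_{p,r}\hookrightarrow L^\infty$ and $\|\na v\|_{L^\infty}\les\|v\|_{B^\sigma_{p,r}}$, hence
\[\|\Pi T_{u^i}\p_i v\|_{B^{\sigma-1}_{p,r}}\les\|u^i\|_{L^\infty}\|\p_i v\|_{B^{\sigma-1}_{p,r}}\les\|u\|_{B^{\sigma-1}_{p,r}}\|v\|_{B^\sigma_{p,r}},\]
and likewise $\|\Pi T_{\p_i v}u^i\|_{B^{\sigma-1}_{p,r}}\les\|\p_i v\|_{L^\infty}\|u^i\|_{B^{\sigma-1}_{p,r}}\les\|v\|_{B^\sigma_{p,r}}\|u\|_{B^{\sigma-1}_{p,r}}$. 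The remainder is the only term for which $\Pi\p_i$ is a multiplier of degree $1$ and which is not a paraproduct, so Remark \ref{re5} does not directly apply; here I would separate high and low frequencies. On the high-frequency part $\Pi\p_i(\mathrm{Id}-\De_{-1})$ is a homogeneous multiplier of degree one, so by Remark \ref{re5} and Remark \ref{re3} it reduces to controlling $\|R(u^i,v)\|_{B^\sigma_{p,r}}$, which by the remainder estimate $\|R(u^i,v)\|_{B^\sigma_{p,r}}\les\|u^i\|_{L^\infty}\|v\|_{B^\sigma_{p,r}}$ of Lemma \ref{le2.1} is $\les\|u\|_{B^{\sigma-1}_{p,r}}\|v\|_{B^\sigma_{p,r}}$. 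On the low-frequency part I would route through $L^p\hookleftarrow\dot B^0_{p,1}$ (Remark \ref{re3}), use the degree-one bound of Remark \ref{re5} down to $\|R(u^i,v)\|_{\dot B^1_{p,1}}$, and then invoke $\dot B^1_{p,1}\hookleftarrow B^1_{p,1}$ together with Lemma \ref{le2.1} and the embedding $B^\sigma_{p,r}\hookrightarrow B^1_{p,1}$, landing on the same bound. Summing the three pieces gives the first inequality.

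For the second inequality I would not repeat the estimate but exploit the symmetry of the pressure operator: since $\Div u=\Div v=0$, a direct computation (summation convention) gives $\Div(u\cdot\na v)=\p_j u^i\,\p_i v^j=\Div(v\cdot\na u)$, so that $\Pi(u\cdot\na v)=\Pi(v\cdot\na u)$; applying the first inequality with the roles of $u$ and $v$ exchanged then yields $\|\Pi(u\cdot\na v)\|_{B^{\sigma-1}_{p,r}}\les\|v\|_{B^{\sigma-1}_{p,r}}\|u\|_{B^\sigma_{p,r}}$.

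The step I expect to be the main obstacle is the low-frequency (the $\De_{-1}$) block. Because $\Pi$ is a zeroth-order Calder\'on--Zygmund operator it fails to be bounded on $L^p$ at the endpoints $p\in\{1,\infty\}$ permitted by \eqref{eq:spr}; on the high-frequency part this is harmless, since $\Pi$ maps $\dot B^{\sigma-1}_{p,r}$ to itself by Remark \ref{re5} and the estimate essentially collapses to Lemma \ref{le3.1}, but at low frequency one is forced to pass through $\dot B^0_{p,1}$ and to use the divergence/paraproduct structure. Maintaining $u$ at regularity exactly $\sigma-1$ throughout is precisely what makes the careful bookkeeping in the remainder and low-frequency terms unavoidable.
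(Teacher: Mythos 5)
Your proposal is correct and follows essentially the same route as the paper's proof: Bony's decomposition exploiting $\Div u=0$, the multiplier bounds of Remarks \ref{re3}--\ref{re5} (degree $0$ on the paraproducts, degree $1$ on the remainder), a low-frequency detour through $L^p$ and $\dot{B}^0_{p,1}$ where the singular operator $\na(-\De)^{-1}\Div$ fails to be bounded, and the identity $\Div(u\cdot\na v)=\Div(v\cdot\na u)$ to get the second inequality by swapping $u$ and $v$. The only difference is organizational: the paper splits the whole expression into high/low frequencies first and disposes of the high-frequency part by citing Lemma \ref{le3.1} applied to $u\cdot\na v$, whereas you apply Bony first and frequency-split only the remainder term; the underlying estimates are identical.
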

\begin{proof}
Due to the fact that $\Div(u\cdot \na v)=\Div(v\cdot \na u)$, then we have
\begin{align*}
& \quad ||\na(-\De)^{-1}\Div(u\cdot \na v)||_{B^{\sigma-1}_{p,r}}\\&\les  ||\na(-\De)^{-1}\Div(\mathrm{Id}-\De_{-1})(u\cdot \na v)||_{\dot{B}^{\sigma-1}_{p,r}}+||\na(-\De)^{-1}\Div\De_{-1}(u\cdot \na v)||_{L^p}\pr
\les  ||(\mathrm{Id}-\De_{-1})(u\cdot \na v)||_{\dot{B}^{\sigma-1}_{p,r}}+||\na(-\De)^{-1}\Div(u\cdot \na v)||_{L^p}
\triangleq \mathrm{III}+\mathrm{IV}.
\end{align*}
Due to Remarks \ref{re3}-\ref{re5}, Lemma \ref{le2.1} and Lemma \ref{le3.1}, we get
\begin{align*}
\mathrm{III} \les  ||(\mathrm{Id}-\De_{-1})(u\cdot \na v)||_{B^{\sigma-1}_{p,r}}
\les  ||u\cdot \na v||_{B^{\sigma-1}_{p,r}}
\les ||u||_{B^{\sigma-1}_{p,r}}||v||_{B^{\sigma}_{p,r}},
\end{align*}
and
\begin{align*}
\mathrm{IV} &
\les ||\sum^d_{i,j=1}\na(-\De)^{-1}\p_i(T_{u^j}\p_j v^i+T_{\p_j v^i}u^j)||_{B^0_{p,1}}\prq
+||\sum^d_{i,j=1}\na(-\De)^{-1}\p_i\p_jR(u^i,u^j)||_{\dot{B}^0_{p,1}}\pr
\les \sum^d_{i,j=1}||T_{u^j}\p_j v^i+T_{\p_j v^i}u^j||_{B^0_{p,1}}+\sum^d_{i,j=1}||R(u^i,v^j)||_{\dot{B}^1_{p,1}}\pr
\les \sum^d_{i,j=1}||T_{u^j}\p_j v^i+T_{\p_j v^i}u^j||_{B^{\sigma-1}_{p,r}}+\sum^d_{i,j=1}||R(u^i,v^j)||_{B^1_{p,1}}\pr
\les ||u||_{B^{\sigma-1}_{p,r}}||\na v||_{L^\infty}+||u||_{L^\infty}||v||_{B^{\sigma}_{p,r}}+||u||_{L^\infty}||v||_{B^1_{p,1}}\pr
\les ||u||_{B^{\sigma-1}_{p,r}}||v||_{B^{\sigma}_{p,r}}.
\end{align*}
This completes this proof of this lemma.
\end{proof}

Combining the results of Lemmas \ref{le3.1}-\ref{le:P2}, we have
\begin{lemm}\label{lem:P}
Assume $(s,p,r)$ satisfies \eqref{eq:spr}. Then

1) there exists a constant $C$, depending only on $d,p,r,s$, such that for all $u,f\in B^s_{p,r}$ with $\mathrm{div\,} u=0$,
\[\|u\cdot \na f\|_{B^{s-1}_{p,r}}\leq C\|u\|_{B^{s-1}_{p,r}}\|f\|_{B^s_{p,r}}.\]

2) there exists a constant $C$, depending only on $d,p,r,s$, such that for all $u,v\in B^s_{p,r}$ with $\mathrm{div\,} u=\mathrm{div\,} v=0$,
\begin{align*}
\|\na(-\De)^{-1}\mathrm{div\,}(u\cdot \na v)\|_{B^s_{p,r}}&\leq C \big(\|u\|_{C^{0,1}}\|v\|_{B^s_{p,r}}+\|v\|_{C^{0,1}}\|u\|_{B^s_{p,r}}\big),\\
\|\na(-\De)^{-1}\mathrm{div\,}(u\cdot \na v)\|_{B^{s-1}_{p,r}}&\leq C \min(\|u\|_{B^{s-1}_{p,r}}\|v\|_{B^s_{p,r}},\|v\|_{B^{s-1}_{p,r}}\|u\|_{B^s_{p,r}}),
\end{align*}
where $||f||_{C^{0,1}}=||f||_{L^\infty}+||\na f||_{L^\infty}$.
\end{lemm}

\begin{lemm}\cite{L.Y}\label{le2}
Let $\sigma\in\mathbb{R}$ and $1\leq p,r\leq\infty$. Let $v$ be a vector field over $\mathbb{R}^d$. Assume that $\sigma>-d\min\{1-\frac{1}{p},\frac{1}{p}\}$. Define $R_j=[v\cdot\nabla,\Delta_j]f$. There a constant $C=C(p,\sigma,d)$ such that \\
\begin{equation*}
\big|\big|(2^{j\sigma}||R_j||_{L^p})_{j\geq-1}\big|\big|_{\ell^r}\leq
\begin{cases}
 C||\nabla v||_{B^{\frac dp}_{p,\infty}\cap L^\infty}||f||_{B^\sigma_{p,r}},\ \mathrm{if} \quad \sigma<1+\frac{d}{p},\\
 C||\nabla v||_{B^{\frac dp+1}_{p,\infty}}||f||_{B^{\sigma}_{p,r}},\quad \quad \mathrm{if} \quad \sigma=1+\frac{d}{p}\ ,r>1,\\
 C||\nabla v||_{B^{\sigma-1}_{p,r}}||f||_{B^\sigma_{p,r}}, \quad \quad \quad \quad  \mathrm{otherwise}.
\end{cases}\end{equation*}
\end{lemm}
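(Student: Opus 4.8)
The plan is to establish this commutator estimate through Bony's paraproduct decomposition, isolating the term that genuinely gains a derivative from the two terms that do not. First I would write $v\cdot\nabla f=\sum_{k=1}^{d}v^{k}\partial_{k}f$ and apply the nonhomogeneous Bony decomposition both to $v^{k}\partial_{k}f$ and to $v^{k}\partial_{k}\Delta_{j}f=v^{k}\Delta_{j}\partial_{k}f$. Subtracting and using that $\partial_{k}$ commutes with $\Delta_{j}$, one gets
\[
R_{j}=\sum_{k=1}^{d}\Big([T_{v^{k}},\Delta_{j}]\partial_{k}f+\big(T_{\Delta_{j}\partial_{k}f}v^{k}-\Delta_{j}T_{\partial_{k}f}v^{k}\big)+\big(R(v^{k},\Delta_{j}\partial_{k}f)-\Delta_{j}R(v^{k},\partial_{k}f)\big)\Big),
\]
which I denote $R_{j}^{1}+R_{j}^{2}+R_{j}^{3}$. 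The first piece is the genuine paraproduct commutator, while the remaining two carry no cancellation and are bounded by estimating each summand directly.

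For $R_{j}^{1}=\sum_{k}[T_{v^{k}},\Delta_{j}]\partial_{k}f=\sum_{k}\sum_{|m-j|\le N}[S_{m-1}v^{k},\Delta_{j}]\Delta_{m}\partial_{k}f$, only a fixed band $|m-j|\le N$ of frequencies contributes. I would invoke the classical kernel commutator lemma: writing $\Delta_{j}g(x)=2^{jd}\int h(2^{j}(x-y))g(y)\,\mathrm{d}y$ and Taylor-expanding $S_{m-1}v^{k}$ to first order, the commutator gains a factor $2^{-j}$ at the cost of $\|\nabla S_{m-1}v^{k}\|_{L^{\infty}}\lesssim\|\nabla v\|_{L^{\infty}}$. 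Since the derivative $\partial_{k}$ supplies a compensating $2^{m}\simeq 2^{j}$, this gives $\|R_{j}^{1}\|_{L^{p}}\lesssim\|\nabla v\|_{L^{\infty}}\sum_{|m-j|\le N}\|\Delta_{m}f\|_{L^{p}}$, hence $\big\|(2^{j\sigma}\|R_{j}^{1}\|_{L^{p}})_{j}\big\|_{\ell^{r}}\lesssim\|\nabla v\|_{L^{\infty}}\|f\|_{B^{\sigma}_{p,r}}$ with no constraint on $\sigma$.

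In $R_{j}^{2}$ and $R_{j}^{3}$ the high (respectively balanced) frequency always sits on $v$, so a derivative may safely be placed there and one expects $\nabla v$ measured in $B^{d/p}_{p,\infty}\cap L^{\infty}$. The regularity hypotheses enter at the summation stage: estimating the relevant blocks by Bernstein's inequality and pairing them with $\|\Delta_{m}v\|_{L^{p}}\lesssim 2^{-m(d/p+1)}\|\nabla v\|_{B^{d/p}_{p,\infty}}$ produces frequency series whose convergence --- in particular for the high--high interactions of the remainder $R_{j}^{3}$ that feed low output frequencies --- is guaranteed precisely by $\sigma>-d\min\{1-\tfrac1p,\tfrac1p\}$. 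Summing in $\ell^{r}$ against $(2^{j\sigma})_{j}$ then reproduces $\|\nabla v\|_{B^{d/p}_{p,\infty}\cap L^{\infty}}\|f\|_{B^{\sigma}_{p,r}}$ throughout the subcritical range $\sigma<1+\tfrac dp$.

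The main obstacle is the behaviour at and beyond the critical index $\sigma=1+\tfrac dp$. At $\sigma=1+\tfrac dp$ the frequency sum arising from $R_{j}^{3}$ is only logarithmically divergent, and closing it forces one to upgrade $\nabla v$ from $B^{d/p}_{p,\infty}$ to $B^{d/p+1}_{p,\infty}$ (absorbing the logarithmic, i.e.\ linear-in-$j$, loss); this is why the borderline case additionally requires $r>1$. For $\sigma>1+\tfrac dp$ one instead uses that $B^{\sigma-1}_{p,r}\hookrightarrow B^{d/p}_{p,\infty}\cap L^{\infty}$, so that $\|\nabla v\|_{B^{\sigma-1}_{p,r}}$ already controls every contribution, yielding the third case. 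Collecting the three pieces in each regime gives the stated trichotomy; since this is exactly the commutator estimate of \cite{L.Y}, at the level of the present note one may simply cite it, but the scheme above is the route I would follow to reprove it.
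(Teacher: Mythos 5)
The paper offers no proof of Lemma \ref{le2} at all: it is quoted from \cite{L.Y} (it is in essence Lemma 2.100 of \cite{B.C.D}), so your sketch can only be judged against the standard argument. Your overall scheme matches it: Bony decomposition of both $v\cdot\nabla\Delta_jf$ and $\Delta_j(v\cdot\nabla f)$, the paraproduct commutator $R_j^1$ handled by the first-order Taylor/kernel lemma (yielding $\|\nabla v\|_{L^\infty}$ with no restriction on $\sigma$), and the lower restriction $\sigma>-d\min\{1-\frac1p,\frac1p\}$ traced to the high--high remainder interactions that feed low output frequencies. Up to and including the subcritical case $\sigma<1+\frac dp$, your plan is sound.

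The genuine gap is in the critical and supercritical cases, where you have located the obstruction in the wrong term. The remainder $R_j^3$ imposes \emph{no upper} restriction on $\sigma$: in $\Delta_jR(v^k,\partial_kf)$ the sum runs over $j'\geq j-N$, so large $\sigma$ only improves the convergence factor $2^{(j-j')\sigma}$. What degenerates at $\sigma=1+\frac dp$ is the piece $\Delta_jT_{\partial_kf}v^k$ of $R_j^2$: there one must bound $\|S_{j'-1}\nabla f\|_{L^\infty}\lesssim\sum_{m\le j'-2}2^{m(1+\frac dp-\sigma)}\big(2^{m\sigma}\|\Delta_mf\|_{L^p}\big)$, and this sum is $O\big(2^{j'(1+\frac dp-\sigma)}\big)$ only when $\sigma<1+\frac dp$; at the critical index it loses a factor of order $(j')^{1-\frac1r}$ (whence case 2, and why nothing is lost when $r=1$), while for $\sigma>1+\frac dp$ it is dominated by the lowest frequencies of $f$ and is merely $O(1)$, so the resulting bound behaves like $2^{j(\sigma-1-\frac dp)}\|\nabla v\|_{B^{d/p}_{p,\infty}}\|f\|_{B^\sigma_{p,\infty}}$ and blows up in $j$. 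For the same reason your derivation of case 3 is circular: you cannot deduce it from the embedding $B^{\sigma-1}_{p,r}\hookrightarrow B^{d/p}_{p,\infty}\cap L^\infty$ applied to $\nabla v$, because the subcritical estimate into which you would plug this embedding is itself false for $\sigma>1+\frac dp$. The correct mechanism rebalances the regularity between the two factors: bound $\|S_{j'-1}\nabla f\|_{L^\infty}\lesssim\|\nabla f\|_{L^\infty}\lesssim\|f\|_{B^{\sigma}_{p,r}}$, which uses the embedding $B^{\sigma-1}_{p,r}\hookrightarrow L^\infty$ applied to $\nabla f$ (this is exactly where the ``otherwise'' hypothesis $\sigma>1+\frac dp$, or $\sigma=1+\frac dp$ with $r=1$, enters), and pair it with $\|\Delta_{j'}v\|_{L^p}\lesssim 2^{-j'\sigma}c_{j'}\|\nabla v\|_{B^{\sigma-1}_{p,r}}$ with $(c_{j'})_{j'}\in\ell^r$, so that the weight $2^{j\sigma}$ is absorbed by $v$ rather than by $f$. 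Relatedly, $r>1$ is not ``required'' in case 2: rather, when $r=1$ the critical case already falls under case 3 via the embedding above, whereas for $r>1$ that embedding fails and one compensates with the stronger norm $\|\nabla v\|_{B^{\frac dp+1}_{p,\infty}}$.
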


We need an estimate for the transport-diffusion equation which is uniform with respect to the viscocity. Consider the following equation:
\beq\label{eq:TDep}
\begin{cases}
\pa_t f+v\cdot \nabla f-\varepsilon \Delta f=g,\\
f(0)=f_0,
\end{cases}
\eeq
where $\varepsilon\geq 0$, $v:{\mathbb R}\times {\mathbb R}^d \to {\mathbb R}^d$, $f_0:{\mathbb R}^d\to {\mathbb R}^N$, and $g:{\mathbb R}\times {\mathbb R}^d\to {\mathbb R}^N$ are given.

\begin{prop}\label{pr:TDe}
Let $d\geq 2, 1\leq p\leq \infty, 1\leq r\leq \infty$. Assume that $\D v=0$ and $s>-1$. There exists a constant $C$, depending only on $d,p,r,s$, such that for any smooth solution $f$ of \eqref{eq:TDep} we have
\begin{align}\label{ES2}
||f||_{L_t^\infty (B^{s}_{p,r})}&\leq ||f_0||_{B^s_{p,r}}
+\int^t_0||g(\tau)||_{B^{s}_{p,r}}\dd \tau
\nonumber\\& \quad +C\int^t_0(||\na v||_{L^\infty}||f||_{B^s_{p,r}}+||\na f||_{L^\infty}||v||_{B^s_{p,r}})\dd \tau.
\end{align}
\end{prop}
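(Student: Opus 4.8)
The plan is to run the standard Littlewood--Paley/commutator scheme, but to organize it so that the diffusion never contributes a constant and the initial-data and source terms keep coefficient exactly $1$; this clean additive structure, rather than the usual exponential Gr\"onwall factor $e^{CV(t)}$, is precisely what the inviscid-limit argument of \cite{G.L.Y} requires. First I would apply $\Delta_j$ to \eqref{eq:TDep} and write the localized equation
\[
\pa_t \Delta_j f + v\cdot\na \Delta_j f - \ep\De \Delta_j f = \Delta_j g + R_j, \qquad R_j=[v\cdot\na,\Delta_j]f,
\]
so that the left-hand side is a pure transport--diffusion acting on the single dyadic block $\Delta_j f$, and all the nonlocal interaction is isolated in the commutator $R_j$.

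Next I would carry out an $L^p$ energy estimate on each block. Multiplying by $|\Delta_j f|^{p-2}\Delta_j f$ and integrating, the transport term drops out because $\D v=0$ (it equals $-\tfrac1p\int(\D v)|\Delta_j f|^p=0$), while the diffusion term is dissipative, $-\ep\int \De\Delta_j f\,\cd\,|\Delta_j f|^{p-2}\Delta_j f = \ep(p-1)\int |\Delta_j f|^{p-2}|\na\Delta_j f|^2\geq 0$, and may simply be discarded on the upper-bound side. This yields, uniformly in $\ep\geq 0$,
\[
\frac{\dd}{\dd t}\|\Delta_j f\|_{L^p}\leq \|\Delta_j g\|_{L^p}+\|R_j\|_{L^p},
\]
with no $\ep$-dependent or exponential factor; this is the structural heart of the estimate. (The endpoint $p=\infty$ is not directly covered by this computation and would be handled by the usual device of proving the bound for finite $p$ with a constant independent of $p$ and passing to the limit, using that $\Delta_j f$ is spectrally supported in a fixed annulus.) I would then integrate in time, multiply by $2^{js}$, and take the $\ell^r$ norm in $j$, using Minkowski's inequality to pull the time integral outside the $\ell^r$ sum, obtaining
\[
\|f\|_{L_t^\infty (B^{s}_{p,r})}\leq \|f_0\|_{B^s_{p,r}}+\int_0^t\|g(\tau)\|_{B^s_{p,r}}\,\dd\tau + \int_0^t \big\|(2^{js}\|R_j\|_{L^p})_j\big\|_{\ell^r}\,\dd\tau,
\]
which already has coefficient $1$ on the first two terms exactly as claimed.

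It remains to bound the commutator term by $\|\na v\|_{L^\infty}\|f\|_{B^s_{p,r}}+\|\na f\|_{L^\infty}\|v\|_{B^s_{p,r}}$, and this is the main work and the only genuinely delicate step. The bound must hold for the full range $s>-1$ and every $p,r\in[1,\infty]$, and it must produce the \emph{symmetric} right-hand side; note this is stronger than, and not reducible to, the one-sided bound of Lemma \ref{le2} (whose hypothesis $\sigma>-d\min\{1-\frac1p,\frac1p\}$ forces $\sigma>0$ when $p\in\{1,\infty\}$). I would therefore establish the symmetric commutator estimate directly: expand $R_j=\sum_i[v^i\pa_i,\Delta_j]f$ via Bony's decomposition of each $v^i\pa_i f$. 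The paraproduct commutator $[T_{v^i},\Delta_j]\pa_i f$ gives the $\|\na v\|_{L^\infty}\|f\|_{B^s_{p,r}}$ contribution through the standard commutator-of-paraproduct estimate; the piece coming from $T_{\pa_i f}v^i$ yields $\|\na f\|_{L^\infty}\|v\|_{B^s_{p,r}}$ since $\pa_i f$ enters in $L^\infty$; and for the remainder $\sum_i R(v^i,\pa_i f)$ I would use $\D v=0$ to rewrite it as $\Div R(v,f)$, trading one derivative so that the low-frequency summation converges exactly under the hypothesis $s>-1$. Assembling the three contributions and inserting the resulting bound into the last display closes the proof.
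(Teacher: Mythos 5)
Your proposal is correct and follows essentially the same route as the paper's proof: apply $\Delta_j$, perform the $L^p$ estimate on each block using $\D v=0$ and the positivity of $-\ep\int_{\R^d}\De\De_jf\,|\De_jf|^{p-2}\De_jf \,\dd x$, discard the dissipation, bound the commutator by the symmetric quantity $\|\na v\|_{L^\infty}\|f\|_{B^s_{p,r}}+\|\na f\|_{L^\infty}\|v\|_{B^s_{p,r}}$, and conclude with $2^{js}$-weighting, the $\ell^r$ norm and Minkowski's inequality. The only real difference is that you re-derive what the paper cites: the positivity of the dissipative term is quoted from Proposition 2.1 of \cite{Danchin} (the rigorous version of your formal integration by parts, which is what one needs for $1\leq p<2$), and the symmetric commutator estimate for $s>-1$, $\D v=0$ is quoted as Lemma 2.100 of \cite{B.C.D}, whose textbook proof is exactly the Bony-decomposition argument you sketch; your observation that Lemma \ref{le2} would not suffice here is correct, and indeed the paper does not use it in this proof. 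The one place where your variant is genuinely weaker is the endpoint $p=\infty$: the device of proving the bound for finite $p$ and letting $p\to\infty$ does not work as stated, because for $f\in B^s_{\infty,r}$ the blocks $\De_jf$ need not lie in any $L^p$ with $p<\infty$ (take $f_0(x)=\sin x_1$), so the finite-$p$ norms may all be infinite. The paper's fix is the maximum principle for the transport-diffusion equation satisfied by $\De_jf$, which gives \eqref{yin1} directly at $p=\infty$; you should substitute that one-line argument for your limiting procedure.
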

\begin{proof}
First, applying $\Delta_j$ to \eqref{eq:TDep} yields
\begin{equation}\label{yin}
\quad \partial_t \Delta_jf+v\cdot \nabla \Delta_jf-\varepsilon\De \De_jf=\Delta_jg+R_j,\quad \quad \quad \quad \quad \Delta_jf|_{t=0} =\Delta_jf_0.
\end{equation}
If $p\in[1,\infty)$, we multiply \eqref{yin} by $|\De_jf|^{p-1}\mathrm{sgn}(\De_jf)$ and then integrate by parts to get
\begin{align*}
\frac1p\frac{\dd}{\dd t}||\De_jf||^p_{L^p}-\varepsilon\int_{\R^d}\De\De_jf|\De_jf|^{p-2}\De_jf \dd x &\leq \frac1p\int_{\R^d} \D v |\De_jf|^p \dd x
\\&\quad +\int_{\R^d}R_j|\De_jf|^{p-2}\De_jf \dd x.
\end{align*}
Using the fact (see Proposition 2.1 in \cite{Danchin}) that for all $j\geq -1$,
\[-\varepsilon\int_{\R^d}\De\De_jf|\De_jf|^{p-2}\De_jf \dd x\geq 0,\]
we have
\begin{align}\label{yin1}
||\Delta_jf(t)||_{L^p}&\leq ||\Delta_jf_0||_{L^p}+\int^t_0||\Delta_jg(\tau)||_{L^p}\dd\tau\nonumber\\&\quad
+\int^t_0\big(||R_j(\tau)||_{L^p}+\frac1p||\mathrm{div} v(\tau)||_{L^\infty}||\Delta_jf(\tau)||_{L^p}\big)\dd\tau.
\end{align}
If $p=\infty$, the above inequality also holds by the maximum principle. According to the fact (see Lemma 2.100, \cite{B.C.D}), for $s>-1, \ \D v=0$, we have
\bal\label{Li-yin}
\big|\big|(2^{js}||R_j||_{L^p})_{j\geq-1}\big|\big|_{\ell^r}\leq C(||\nabla v||_{L^\infty}||f||_{B^s_{p,r}}+||\nabla f||_{L^\infty}||\nabla v||_{B^{s-1}_{p,r}}).
\end{align}
Multiplying both sides of \eqref{yin1} by $2^{js}$ and taking $\ell^r$ norm, it follows from Minkovshi's inequality that
\begin{align*}
||f(t)||_{B^s_{p,r}}&\leq ||f_0||_{B^s_{p,r}}+\int_0^t||g(\tau)||_{B^s_{p,r}}\dd\tau
\nonumber\\& \quad +C\int^t_0(||\na v||_{L^\infty}||f||_{B^s_{p,r}}+||\na f||_{L^\infty}||v||_{B^s_{p,r}})\dd \tau.
\end{align*}
This completes the proof of this lemma.
\end{proof}

\section{Some usefull estimates}

In this section, we will establish some usefull estimates for smooth solutions of \eqref{MHD1} and \eqref{MHD2}, which is the key component in the proof of Theorems \ref{th1} and \ref{th2}. This estimates can be stated as follows.

\begin{lemm}\label{le-1}
Let $d\geq 2$ and $s>1+\frac d2$. Suppose that $(u^1,b^1)\in \mathcal{C}([0,T];H^{s})$ and $(u^2,b^2)\in \mathcal{C}([0,T];H^{s+1})$ are two solutions of \eqref{MHD1} with initial data $(u^1_0,b^1_0)$ and $(u^2_0,b^2_0)$ respectively. Denote $\delta u=u^1-u^2$ and $\delta b=b^1-b^2$. Then, we have
\bbal
||\delta u(t)||^2_{H^{s-1}}+||\delta b(t)||^2_{H^{s-1}}\leq (||\delta u_0||^2_{H^{s-1}}+||\delta b_0||^2_{H^{s-1}})e^{\mathrm{A}(t)},
\end{align*}
and
\bbal
||\delta u(t)||^2_{H^{s}}+||\delta b(t)||^2_{H^{s}}&\leq \Big(||\delta u_0||^2_{H^{s}}+||\delta b_0||^2_{H^{s}}\\& \quad +C\int^t_0(||u^2||^2_{H^{s+1}}+||b^2||^2_{H^{s+1}})(||\delta u||^2_{H^{s-1}}+||\delta b||^2_{H^{s-1}}) \dd \tau\Big)e^{\mathrm{A}(t)},
\end{align*}
with
\bbal
\mathrm{A}(t)=C\int^t_0(1+||u^1||_{H^s}+||u^2||_{H^s}+||b^1||_{H^s}+||b^2||_{H^s})\dd \tau.
\end{align*}
\end{lemm}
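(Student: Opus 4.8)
The plan is to derive the evolution equations for the differences and run a Littlewood--Paley energy estimate at the two levels $H^{s-1}$ and $H^s$, carried out jointly on $\delta u$ and $\delta b$. Subtracting the two copies of \eqref{MHD1} and using the bilinear splittings $u^1\cd\na u^1-u^2\cd\na u^2=u^1\cd\na\delta u+\delta u\cd\na u^2$ and $b^1\cd\na b^1-b^2\cd\na b^2=b^1\cd\na\delta b+\delta b\cd\na b^2$ (and the analogous ones in the $b$-equation), I would write
\bbal
\p_t\delta u+u^1\cd\na\delta u-\mu\De\delta u+\na\delta P&=-\delta u\cd\na u^2+b^1\cd\na\delta b+\delta b\cd\na b^2,\\
\p_t\delta b+u^1\cd\na\delta b-\nu\De\delta b&=-\delta u\cd\na b^2+b^1\cd\na\delta u+\delta b\cd\na u^2,
\end{align*}
where $\D\delta u=\D\delta b=\D u^1=\D b^1=0$.

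Next I would apply $\De_j$ to each equation, take $L^2$ inner products against $\De_j\delta u$ and $\De_j\delta b$, and add. Four structural simplifications occur. The transport terms $\int u^1\cd\na\De_j\delta u\cd\De_j\delta u$ vanish since $\D u^1=0$; the pressure term $\int\na\De_j\delta P\cd\De_j\delta u$ vanishes since $\D\delta u=0$; the dissipative contributions $\mu\|\na\De_j\delta u\|^2_{L^2}$, $\nu\|\na\De_j\delta b\|^2_{L^2}$ carry the favourable sign and are simply discarded (so the bound is uniform in $\mu,\nu\ge0$); and, crucially, the principal parts of the magnetic coupling cancel, $\int(b^1\cd\na\De_j\delta b)\cd\De_j\delta u+\int(b^1\cd\na\De_j\delta u)\cd\De_j\delta b=\int b^1\cd\na(\De_j\delta u\cd\De_j\delta b)=0$, again by $\D b^1=0$. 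What remains on the right are the commutators $[u^1\cd\na,\De_j]\delta u$, $[u^1\cd\na,\De_j]\delta b$, $[b^1\cd\na,\De_j]\delta b$, $[b^1\cd\na,\De_j]\delta u$, together with the reaction terms $\delta u\cd\na u^2$, $\delta b\cd\na b^2$, $\delta u\cd\na b^2$, $\delta b\cd\na u^2$.

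For the $H^{s-1}$ bound I multiply the $j$-th inequality by $2^{2j(s-1)}$, sum, and apply Cauchy--Schwarz in $j$. The commutators are handled by Lemma \ref{le2} with $p=r=2$: for $s>1+\fr d2$ one checks, in each of its three cases, that $\big\|(2^{j(s-1)}\|[u^1\cd\na,\De_j]\delta u\|_{L^2})_j\big\|_{\ell^2}\les\|u^1\|_{H^s}\|\delta u\|_{H^{s-1}}$, and likewise with $(u^1,\delta u)$ replaced by $(b^1,\delta b)$, etc. The reaction terms are bounded by Lemma \ref{lem:P}(1) using $\D\delta u=\D\delta b=0$, e.g. $\|\delta u\cd\na u^2\|_{H^{s-1}}\les\|\delta u\|_{H^{s-1}}\|u^2\|_{H^s}$. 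Collecting these gives $\fr{\dd}{\dd t}(\|\delta u\|^2_{H^{s-1}}+\|\delta b\|^2_{H^{s-1}})\les(1+\|u^1\|_{H^s}+\|u^2\|_{H^s}+\|b^1\|_{H^s}+\|b^2\|_{H^s})(\|\delta u\|^2_{H^{s-1}}+\|\delta b\|^2_{H^{s-1}})$, and Gr\"onwall's inequality yields the first estimate with the stated $\mathrm A(t)$.

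At level $s$ the commutators again close through Lemma \ref{le2} (now in its ``otherwise'' regime, since $s>1+\fr d2$, giving $\les\|u^1\|_{H^s}\|\delta u\|_{H^s}$). The genuinely new difficulty, which I expect to be the main obstacle, is the reaction terms: at regularity $s$ they can no longer be closed with only $H^{s-1}$ norms of the differences, because the product estimate forces $\|\delta u\cd\na u^2\|_{H^s}\les\|\delta u\|_{L^\infty}\|u^2\|_{H^{s+1}}+\|\na u^2\|_{L^\infty}\|\delta u\|_{H^s}$, and it is here that the extra $H^{s+1}$ regularity of $(u^2,b^2)$ is consumed. Using $\|\delta u\|_{L^\infty}\les\|\delta u\|_{H^{s-1}}$ (valid as $s-1>\fr d2$) and Young's inequality $\|\delta u\|_{H^{s-1}}\|u^2\|_{H^{s+1}}\|\delta u\|_{H^s}\le\fr12\|u^2\|^2_{H^{s+1}}\|\delta u\|^2_{H^{s-1}}+\fr12\|\delta u\|^2_{H^s}$ splits this contribution into a source carrying $(\|u^2\|^2_{H^{s+1}}+\|b^2\|^2_{H^{s+1}})$ times the already-controlled lower-order energy, plus a term absorbed into the Gr\"onwall coefficient (this is the origin of the constant $1$ in $\mathrm A(t)$). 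A final application of Gr\"onwall then produces the second estimate. The delicate points are thus the case-by-case bookkeeping of the commutator norms in Lemma \ref{le2} and the Young splitting that pairs the $H^{s+1}$ norms only with the $H^{s-1}$ difference.
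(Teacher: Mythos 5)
Your proposal is correct and follows essentially the same route as the paper's own proof: subtract the two systems, apply $\De_j$ and run the joint $L^2$ energy estimate exploiting the divergence-free cancellations (transport, pressure, dissipation sign, and the $b^1$-coupling), bound the commutators by Lemma \ref{le2} and the reaction terms by the product/transport estimates, and at level $H^s$ use exactly the same Young splitting that pairs $\|u^2\|_{H^{s+1}},\|b^2\|_{H^{s+1}}$ with the $H^{s-1}$ norms of the differences before applying Gr\"onwall. The only cosmetic difference is that you make explicit the structural cancellations and the case analysis in Lemma \ref{le2}, which the paper leaves implicit in its bounds for $K_1$--$K_6$.
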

\begin{proof}
It is easy to show that
\beq\label{MHD1-1}\bca
\partial_t\delta u+u^1\cdot\nabla \delta u+\delta u\cd\nabla u^2-\mu\De \delta u+\nabla P=b^1\cdot \nabla \delta b+\delta b\cd \na b^2, \\
\partial_t\delta b+u^1\cdot \nabla \delta b+\delta u\cd \na b^2-\nu\De \delta b=b^1\cdot\nabla \delta u+\delta b\cd \na u^2,\\
\mathrm{div}\delta u=\mathrm{div}\delta b=0,\quad (\delta u,\delta b)|_{t=0}=(\delta u_0,\delta b_0).
\eca\eeq
Now, we apply $\De_j$ to \eqref{MHD1-1}, and take the inner product with $(\De_j\delta u,\De_j\delta b)$ and integrate by parts to have
\bal\label{eq1}
\frac12\frac{\dd}{\dd t}(||\De_j\delta u||^2_{L^2}+||\De_j\delta b||^2_{L^2})\leq K_1+K_2+K_3+K_4+K_5,
\end{align}
where
\bbal
&K_1=-\int_{\R^d}[\De_j,u^1\cdot \na]\delta u\cd \De_j\delta u\ \dd x, \quad \quad K_2=-\int_{\R^d}[\De_j,u^1\cdot \na]\delta b\cd \De_j\delta b\ \dd x,\\
&K_3=\int_{\R^d}[\De_j,b^1\cdot \na]\delta b\cd \De_j\delta u\ \dd x, \quad \quad K_4=\int_{\R^d}[\De_j,b^1\cdot \na]\delta u\cd \De_j\delta b\ \dd x,\\
&K_5=\int_{\R^d}\De_j(\delta b\cd \na b^2-\delta u\cd\nabla u^2)\De_j\delta u\ \dd x,\\
&K_6=\int_{\R^d}\De_j(\delta b\cd \na u^2-\delta u\cd\nabla b^2)\De_j\delta b\ \dd x.
\end{align*}
On the one hand, according to Lemma \ref{le2}, it is easy to estimate
\bbal
&|K_1|\lesssim ||[\De_j,u^1\cdot \na]\delta u||_{L^2}||\De_j\delta u||_{L^2}\lesssim 2^{-2j(s-1)}c^2_j||\na u^1||_{H^{s-1}}||\delta u||^2_{H^{s-1}},\\
&|K_2|\lesssim ||[\De_j,u^1\cdot \na]\delta b||_{L^2}||\De_j\delta b||_{L^2}\lesssim 2^{-2j(s-1)}c^2_j||\na u^1||_{H^{s-1}}||\delta b||^2_{H^{s-1}},\\
&|K_3|\lesssim ||[\De_j,b^1\cdot \na]\delta b||_{L^2}||\De_j\delta u||_{L^2}\lesssim 2^{-2j(s-1)}c^2_j||\na b^1||_{H^{s-1}}||\delta u||_{H^{s-1}}||\delta b||_{H^{s-1}},\\
&|K_4|\lesssim ||[\De_j,b^1\cdot \na]\delta u||_{L^2}||\De_j\delta b||_{L^2}\lesssim 2^{-2j(s-1)}c^2_j||\na b^1||_{H^{s-1}}||\delta u||_{H^{s-1}}||\delta b||_{H^{s-1}},\\
&|K_5|\lesssim 2^{-2j(s-1)}c^2_j(||b^2||_{H^{s}}||\delta u||_{H^{s-1}}||\delta b||_{H^{s-1}}+||u^2||_{H^s}||\delta u||^2_{H^{s-1}}),\\
&|K_6|\lesssim 2^{-2j(s-1)}c^2_j(||b^2||_{H^{s}}||\delta u||_{H^{s-1}}||\delta b||_{H^{s-1}}+||u^2||_{H^s}||\delta b||^2_{H^{s-1}}).
\end{align*}
Integrating \eqref{eq1} over $[0,t]$, multiplying the inequality above by $2^{2j(s-1)}$ and summing over $j\geq -1$, we have
\bal\label{eq2}
||\delta u||^2_{H^{s-1}}+||\delta b||^2_{H^{s-1}}&\leq ||\delta u_0||^2_{H^{s-1}}+||\delta b_0||^2_{H^{s-1}}\nonumber\\& \quad +\int^t_0\mathrm{A}'(\tau)(||\delta u||^2_{H^{s-1}}+||\delta b||^2_{H^{s-1}})\dd \tau.
\end{align}
On the anther hand, according to Lemma \ref{le2}, it is easy to estimate
\bbal
&|K_1|\lesssim ||[\De_j,u^1\cdot \na]\delta u||_{L^2}||\De_j\delta u||_{L^2}\lesssim 2^{-2js}c^2_j||\na u^1||_{H^{s-1}}||\delta u||^2_{H^{s}},\\
&|K_2|\lesssim ||[\De_j,u^1\cdot \na]\delta b||_{L^2}||\De_j\delta b||_{L^2}\lesssim 2^{-2js}c^2_j||\na u^1||_{H^{s-1}}||\delta b||^2_{H^{s}},\\
&|K_3|\lesssim ||[\De_j,b^1\cdot \na]\delta b||_{L^2}||\De_j\delta u||_{L^2}\lesssim 2^{-2js}c^2_j||\na b^1||_{H^{s-1}}||\delta u||_{H^{s}}||\delta b||_{H^{s}},\\
&|K_4|\lesssim ||[\De_j,b^1\cdot \na]\delta u||_{L^2}||\De_j\delta b||_{L^2}\lesssim 2^{-2js}c^2_j||\na b^1||_{H^{s-1}}||\delta u||_{H^{s}}||\delta b||_{H^{s}},\\
&|K_5|\lesssim 2^{-2js}c^2_j(||b^2||_{H^{s}}||\delta u||_{H^{s}}||\delta b||_{H^{s}}+||u^2||_{H^s}||\delta u||^2_{H^{s}}\\& \quad \quad
\quad +||b^2||_{H^{s+1}}||\delta u||_{H^{s}}||\delta b||_{H^{s-1}}+||u^2||_{H^{s+1}}||\delta u||_{H^{s}}||\delta u||_{H^{s-1}}),\\
&|K_6|\lesssim 2^{-2js}c^2_j(||b^2||_{H^{s}}||\delta u||_{H^{s}}||\delta b||_{H^{s}}+||u^2||_{H^s}||\delta b||^2_{H^{s}}\\& \quad \quad
\quad +||b^2||_{H^{s+1}}||\delta u||_{H^{s-1}}||\delta b||_{H^{s}}+||u^2||_{H^{s+1}}||\delta b||_{H^{s}}||\delta b||_{H^{s-1}}).
\end{align*}
Integrating \eqref{eq1} over $[0,t]$, multiplying the inequality above by $2^{2j{s}}$ and summing over $j\geq -1$, we have
\bal\label{eq3}
||\delta u||^2_{H^{s}}+||\delta b||^2_{H^{s}}&\leq ||\delta u_0||^2_{H^{s}}+||\delta b_0||^2_{H^{s}}\nonumber\\& \quad +\int^t_0\mathrm{A}'(\tau)(||\delta u||^2_{H^{s}}+||\delta b||^2_{H^{s}})\dd \tau \nonumber
\\& \quad +C\int^t_0(||u^2||^2_{H^{s+1}}+||b^2||^2_{H^{s+1}})(||\delta u||^2_{H^{s-1}}+||\delta b||^2_{H^{s-1}}) \dd \tau.
\end{align}
This complete the proof of this lemma.
\end{proof}

\begin{lemm}\label{le-2}
Let $d\geq 2$ and $s>1+\frac d2$. Suppose that $(u,b)\in \mathcal{C}([0,T];H^{s+2})$ is the solution of \eqref{MHD1} with initial data
$(u_0,b_0)$ and $(v,c)\in \mathcal{C}([0,T];H^{s+2})$ is the solution of \eqref{ideal-MHD} with initial data $(v_0,c_0)$. Denote $\omega=u-v$ and $a=b-c$. Then, we have
\bbal
||\omega(t)||^2_{H^{s-1}}+||a(t)||^2_{H^{s-1}}&\leq C\Big(||\omega_0||^2_{H^{s-1}}+||a_0||^2_{H^{s-1}}\\& \quad +\mu^2 \int^t_0||u||^2_{H^{s+1}}\dd \tau+\nu^2 \int^t_0||b||^2_{H^{s+1}}\dd \tau\Big)e^{\mathrm{B}(t)},
\end{align*}
and
\bbal
||\omega(t)||^2_{H^{s}}+||a(t)||^2_{H^{s}}&\leq C\Big(||\omega_0||^2_{H^{s}}+||a_0||^2_{H^{s}}+\mu^2 \int^t_0||u||^2_{H^{s+2}}\dd \tau+\nu^2 \int^t_0||b||^2_{H^{s+2}}\dd \tau\\& \quad +\int^t_0(||v||^2_{H^{s+1}}+||c||^2_{H^{s+1}})(||w||^2_{H^{s-1}}+||a||^2_{H^{s-1}}) \dd \tau\Big)e^{\mathrm{B}(t)},
\end{align*}
with
\bbal
\mathrm{B}(t)=C\int^t_0(1+||u||_{H^s}+||v||_{H^s}+||b||_{H^s}+||c||_{H^s})\dd \tau.
\end{align*}
\end{lemm}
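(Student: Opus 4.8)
The plan is to run the $L^2$-based energy argument of Lemma \ref{le-1}, the only genuinely new feature being that the ideal system \eqref{ideal-MHD} is inviscid, so the dissipative terms $-\mu\De u$ and $-\nu\De b$ do not cancel and must be carried along as inhomogeneous source terms. First I would subtract \eqref{ideal-MHD} from \eqref{MHD1}, use the splittings $u\cd\na u-v\cd\na v=u\cd\na\omega+\omega\cd\na v$ and $b\cd\na b-c\cd\na c=b\cd\na a+a\cd\na c$ together with their magnetic counterparts, and move the surviving Laplacians to the right-hand side, obtaining
\beq
\bca
\pa_t\omega+u\cd\na\omega+\omega\cd\na v+\na(P-Q)=b\cd\na a+a\cd\na c+\mu\De u,\\
\pa_t a+u\cd\na a+\omega\cd\na c=b\cd\na\omega+a\cd\na v+\nu\De b,\\
\D\omega=\D a=0,\quad (\omega,a)|_{t=0}=(\omega_0,a_0).
\eca
\eeq
This is precisely the difference system \eqref{MHD1-1} of Lemma \ref{le-1} under the dictionary $u^1\to u$, $u^2\to v$, $b^1\to b$, $b^2\to c$, $\delta u\to\omega$, $\delta b\to a$, augmented by the two forcing terms $\mu\De u$ and $\nu\De b$.

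Next I would apply $\De_j$, pair with $(\De_j\omega,\De_j a)$ in $L^2$ and integrate by parts. Since $\D\omega=\D a=0$, the pressure gradient is annihilated and the transport terms $u\cd\na\De_j\omega$, $u\cd\na\De_j a$ give no contribution, so the right-hand side reduces to the same six commutator and bilinear terms $K_1,\dots,K_6$ handled in Lemma \ref{le-1} plus the two new contributions
\[\mu\int_{\R^d}\De_j\De u\cd\De_j\omega\,\dd x\quad \mathrm{and} \quad\nu\int_{\R^d}\De_j\De b\cd\De_j a\,\dd x.\]
I would quote the bounds for $K_1,\dots,K_6$ verbatim from Lemma \ref{le-1} (Lemma \ref{le2} for the commutators and the product estimates for the bilinear terms). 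For the new terms I would use Cauchy--Schwarz together with the Bernstein relation $\|\De_j\De u\|_{L^2}\simeq 2^{2j}\|\De_j u\|_{L^2}$ to get, after multiplying by $2^{2j(s-1)}$ and summing over $j\geq-1$,
\[\sum_{j\geq-1}2^{2j(s-1)}\mu\|\De_j\De u\|_{L^2}\|\De_j\omega\|_{L^2}\les\mu\|u\|_{H^{s+1}}\|\omega\|_{H^{s-1}},\]
and likewise $\nu\|b\|_{H^{s+1}}\|a\|_{H^{s-1}}$; at the $H^s$ level the same computation with weight $2^{2js}$ produces $\mu\|u\|_{H^{s+2}}\|\omega\|_{H^s}$ and $\nu\|b\|_{H^{s+2}}\|a\|_{H^s}$.

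Finally I would split these mixed products by Young's inequality, e.g. $\mu\|u\|_{H^{s+1}}\|\omega\|_{H^{s-1}}\leq\frac12\mu^2\|u\|^2_{H^{s+1}}+\frac12\|\omega\|^2_{H^{s-1}}$, sending the squared factor into the forcing and the quadratic $\|\omega\|^2_{H^{s-1}}$ into the Gronwall coefficient (it is controlled by the ``$1$'' in $\mathrm{B}'$). Integrating in time and applying Gronwall's inequality then gives the $H^{s-1}$ estimate at once, and the $H^s$ estimate after also carrying the higher-order parts of $K_5,K_6$, which --- exactly as in Lemma \ref{le-1} --- generate the extra term $\int_0^t(\|v\|^2_{H^{s+1}}+\|c\|^2_{H^{s+1}})(\|\omega\|^2_{H^{s-1}}+\|a\|^2_{H^{s-1}})\,\dd\tau$. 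I expect the only real subtlety to be the two-derivative loss in the sources: controlling $\mu\De u$ in $H^{s-1}$ (resp. $H^s$) costs $\|u\|_{H^{s+1}}$ (resp. $\|u\|_{H^{s+2}}$), which is precisely why the hypothesis requires $(u,b),(v,c)\in\mathcal{C}([0,T];H^{s+2})$; apart from this, the argument is a transcription of Lemma \ref{le-1}.
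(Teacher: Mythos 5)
Your proposal is correct and follows essentially the same route as the paper: the paper derives exactly the difference system you wrote (with the sources $\mu\De u$, $\nu\De b$) and then invokes ``the similar argument as in \eqref{eq2} and \eqref{eq3}'' plus Gronwall, which is precisely the energy scheme you carried out in detail. Your treatment of the dissipative sources --- Bernstein plus Cauchy--Schwarz to get $\mu\|u\|_{H^{s+1}}\|\omega\|_{H^{s-1}}$ (resp.\ $\mu\|u\|_{H^{s+2}}\|\omega\|_{H^s}$), then Young's inequality with the quadratic remainder absorbed by the constant $1$ in $\mathrm{B}'$ --- is exactly the mechanism the paper's statement presupposes, including the reason for the $H^{s+2}$ hypothesis and for the ``$1+$'' in $\mathrm{B}(t)$.
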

\begin{proof}
It is easy to show that
\beq\begin{cases}
\partial_t\omega+u\cdot\nabla \omega+\omega\cdot\na v+\nabla P=b\cdot \nabla a+a\cd \na c+\mu\De u, \\
\partial_ta+u\cdot \nabla a+\omega \cd \na c=b\cdot\nabla \omega+a\cd \na v+\nu\De b,\\
\mathrm{div} \omega=\mathrm{div} a=0,\quad (\omega,a)|_{t=0}=(\omega_0,a_0).
\end{cases}\eeq
The similar argument as in \eqref{eq2}, we have
\bbal
||\omega||^2_{H^{s-1}}+||a||^2_{H^{s-1}}&\leq ||\omega_0||^2_{H^{s-1}}+||a_0||^2_{H^{s-1}}+C\mu^2\int^t_0||u||^2_{H^{s+1}}\dd \tau+C\nu^2\int^t_0||b||^2_{H^{s+1}}\dd \tau\\& \quad +\int^t_0\mathrm{B}'(\tau)(||\omega||^2_{H^{s-1}}+||a||^2_{H^{s-1}})\dd \tau.
\end{align*}
The similar argument as in \eqref{eq3}, we have
\bbal
||\omega||^2_{H^{s}}+||a||^2_{H^{s}}&\leq ||\omega_0||^2_{H^{s}}+||a_0||^2_{H^{s}}+C\mu^2\int^t_0||u||^2_{H^{s+2}}\dd \tau+C\nu^2\int^t_0||b||^2_{H^{s+2}}\dd \tau\\& \quad +\int^t_0\mathrm{B}'(\tau)(||\omega||^2_{H^{s}}+||a||^2_{H^{s}})\dd \tau
\\& \quad +C\int^t_0(||v||^2_{H^{s+1}}+||c||^2_{H^{s+1}})(||\omega||^2_{H^{s-1}}+||a||^2_{H^{s-1}}) \dd \tau.
\end{align*}
This along with Gronwall's inequality completes the proof of the lemma.
\end{proof}

\begin{lemm}\label{le-3}
Let $d\geq 2$ and $(s,p,r)$ satisfies \eqref{eq:spr}. Suppose that $(u^1,b^1)\in \mathcal{C}([0,T];B^{s}_{p,r})$ and $(u^2,b^2)\in \mathcal{C}([0,T];B^{s+1}_{p,r})$ are two solutions of \eqref{MHD2} with initial data $(u^1_0,b^1_0)$ and $(u^2_0,b^2_0)$ respectively. Denote $\delta u=u^1-u^2$ and $\delta b=b^1-b^2$. Then, we have
\bbal
||\delta u(t)||_{B^{s-1}_{p,r}}+||\delta b(t)||_{B^{s-1}_{p,r}}\leq C(||\delta u_0||_{B^{s-1}_{p,r}}+||\delta b_0||_{B^{s-1}_{p,r}})e^{\bar{\mathrm{A}}(t)},
\end{align*}
and
\bbal
||\delta u(t)||_{B^{s}_{p,r}}+||\delta b(t)||_{B^{s}_{p,r}}&\leq C \Big(||\delta u_0||_{B^{s}_{p,r}}+||\delta b_0||_{B^{s}_{p,r}}\\& \quad +\int^t_0(||u^2||_{B^{s+1}_{p,r}}+||b^2||_{B^{s+1}_{p,r}})(||\delta u||_{B^{s-1}_{p,r}}+||\delta b||_{B^{s-1}_{p,r}}) \dd \tau\Big)e^{\bar{\mathrm{A}}(t)},
\end{align*}
with
\bbal
\bar{\mathrm{A}}(t)=C\int^t_0(||u^1||_{B^s_{p,r}}+||u^2||_{B^s_{p,r}}+||b^1||_{B^s_{p,r}}+||b^2||_{B^s_{p,r}})\dd \tau.
\end{align*}
\end{lemm}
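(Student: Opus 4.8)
The plan is to exploit the Elsässer symmetrization, which converts the magnetic coupling into pure transport and thereby avoids the derivative loss that a term-by-term estimate incurs in a general Besov space. Set $z^{i,\pm}=u^i\pm b^i$ for $i=1,2$ and $\delta z^\pm=\delta u\pm\delta b=z^{1,\pm}-z^{2,\pm}$; note $\Div z^{i,\pm}=\Div\delta z^\pm=0$. Subtracting the two copies of \eqref{MHD2} and then adding/subtracting the resulting $\delta u$- and $\delta b$-equations, the magnetic advection terms $b^1\cd\na\delta u$ and $b^1\cd\na\delta b$ recombine into the drift, and one obtains
\[
\pa_t\delta z^\pm+z^{1,\mp}\cd\na\delta z^\pm-\mu\De\delta z^\pm+\na\delta P=-\,\delta z^\mp\cd\na z^{2,\pm},
\]
where $\delta P=P_1-P_2$ and the drift $z^{1,\mp}$ is divergence free. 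This is exactly the transport--diffusion equation \eqref{eq:TDep} with $\ep=\mu$, so the viscosity will not enter the bounds.

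The core is to run, on each $\delta z^\pm$, the Littlewood--Paley energy estimate underlying Proposition \ref{pr:TDe} at the two levels $\sigma\in\{s-1,s\}$. Applying $\De_j$, performing the $L^p$ estimate (using $\Div z^{1,\mp}=0$ and the positivity of the diffusion term) and invoking the commutator bound of Lemma \ref{le2} with drift $z^{1,\mp}$ yields
\[
\|\delta z^\pm\|_{B^\sigma_{p,r}}\le\|\delta z^\pm_0\|_{B^\sigma_{p,r}}+\int_0^t\big(\|\delta z^\mp\cd\na z^{2,\pm}\|_{B^\sigma_{p,r}}+\|\na\delta P\|_{B^\sigma_{p,r}}\big)\,\dd\tau+C\int_0^t\|z^{1,\mp}\|_{B^s_{p,r}}\|\delta z^\pm\|_{B^\sigma_{p,r}}\,\dd\tau .
\]
I would use Lemma \ref{le2} directly rather than the packaged form of Proposition \ref{pr:TDe}, because at level $s-1$ its stated $\|\na f\|_{L^\infty}$-term would not close; in each of the three regimes of Lemma \ref{le2} the norm of $\na z^{1,\mp}$ that appears is dominated by $\|z^{1,\mp}\|_{B^s_{p,r}}\lesssim\|u^1\|_{B^s_{p,r}}+\|b^1\|_{B^s_{p,r}}$, which is precisely what feeds $\bar{\mathrm A}$.

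It remains to bound the two source contributions. The pressure is recovered from $\De\delta P=\Div(u^1\cd\na\delta u+\delta u\cd\na u^2-b^1\cd\na\delta b-\delta b\cd\na b^2)$, so $\na\delta P$ is a sum of four terms $\na(-\De)^{-1}\Div(w\cd\na\zeta)$ with $w,\zeta$ divergence free; the second assertion of Lemma \ref{lem:P} bounds them at level $s-1$ by $\bar{\mathrm A}'(\|\delta u\|_{B^{s-1}_{p,r}}+\|\delta b\|_{B^{s-1}_{p,r}})$ and at level $s$ (its first line, using $B^s_{p,r}\hookrightarrow C^{0,1}$) by $\bar{\mathrm A}'(\|\delta u\|_{B^{s}_{p,r}}+\|\delta b\|_{B^{s}_{p,r}})$. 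For the advection term, at level $s-1$ Lemma \ref{le3.1} applied with the divergence-free field $\delta z^\mp$ gives $\|\delta z^\mp\cd\na z^{2,\pm}\|_{B^{s-1}_{p,r}}\lesssim\|\delta z^\mp\|_{B^{s-1}_{p,r}}\|z^{2,\pm}\|_{B^s_{p,r}}$, again of $\bar{\mathrm A}'$-type; at level $s$ I would instead use the product law and $B^{s-1}_{p,r}\hookrightarrow L^\infty$ to split
\[
\|\delta z^\mp\cd\na z^{2,\pm}\|_{B^s_{p,r}}\lesssim\|\delta z^\mp\|_{B^{s-1}_{p,r}}\|z^{2,\pm}\|_{B^{s+1}_{p,r}}+\|z^{2,\pm}\|_{B^s_{p,r}}\|\delta z^\mp\|_{B^s_{p,r}},
\]
the first piece producing exactly the inhomogeneous $\|u^2\|_{B^{s+1}_{p,r}}+\|b^2\|_{B^{s+1}_{p,r}}$ term in the statement and the second being of $\bar{\mathrm A}'$-type.

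Finally I would add the two $\pm$ inequalities, use $\|\delta z^+\|_{B^\sigma_{p,r}}+\|\delta z^-\|_{B^\sigma_{p,r}}\simeq\|\delta u\|_{B^\sigma_{p,r}}+\|\delta b\|_{B^\sigma_{p,r}}$, and close by Gronwall: at level $s-1$ every nonlinear term has been absorbed into $\bar{\mathrm A}'(\|\delta u\|_{B^{s-1}_{p,r}}+\|\delta b\|_{B^{s-1}_{p,r}})$, giving the first bound, while at level $s$ the same scheme carries in addition the inhomogeneous $B^{s+1}$ integral, giving the second. The one genuinely delicate point is the opening step: only after the Elsässer recombination does every remaining nonlinearity lose at most the correct number of derivatives, so that Lemmas \ref{le3.1}, \ref{lem:P} and \ref{le2} apply with the regularities at hand. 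A naive estimate of $b^1\cd\na\delta b$ in $B^{s-1}_{p,r}$ would force $\|\delta b\|_{B^s_{p,r}}$ on the right-hand side and break the scheme, and this is exactly what the symmetrization circumvents.
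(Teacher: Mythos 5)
Your proof is correct, and structurally it is the same as the paper's: both pass to the Elsässer variables $z^{i,\pm}=u^i\pm b^i$, view the difference system as transport--diffusion equations with divergence-free drifts $z^{1,\mp}$, bound the source $\delta z^\mp\cd\na z^{2,\pm}$ at level $s-1$ by Lemma \ref{le3.1} and at level $s$ by the product law (which is where the $B^{s+1}_{p,r}$ factor of $u^2,b^2$ enters), bound the pressure by Lemma \ref{lem:P}, close with Gronwall, and return to $(\delta u,\delta b)$ via the equivalence \eqref{li}. The one genuine difference is the treatment of the transport part, and there your choice is the better one: the paper simply cites Proposition \ref{pr:TDe} at both levels, but that proposition --- whose commutator input is \eqref{Li-yin} --- carries the term $\|\na f\|_{L^\infty}\|v\|_{B^\sigma_{p,r}}$, and at $\sigma=s-1$ with $f=\delta z^\pm$, $v=z^{1,\mp}$ this term is \emph{not} of Gronwall type: under \eqref{eq:spr} one only has $B^{s-1}_{p,r}\hookrightarrow L^\infty$, not $B^{s-1}_{p,r}\hookrightarrow C^{0,1}$, so $\|\na \delta z^\pm\|_{L^\infty}$ cannot be absorbed into $\|\delta z^\pm\|_{B^{s-1}_{p,r}}$; taken literally, the paper's step leaves an additive bounded term rather than a factor of the initial difference, which would ruin the first inequality of the lemma (precisely the one needed in the proof of Theorem \ref{th2}). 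Your substitution of Lemma \ref{le2} for Proposition \ref{pr:TDe} repairs exactly this point: in each of its three regimes the commutator coefficient ($\|\na v\|_{B^{d/p}_{p,\infty}\cap L^\infty}$, $\|\na v\|_{B^{d/p+1}_{p,\infty}}$, or $\|\na v\|_{B^{\sigma-1}_{p,r}}$) is dominated by $\|z^{1,\mp}\|_{B^{s}_{p,r}}\lesssim\|u^1\|_{B^s_{p,r}}+\|b^1\|_{B^s_{p,r}}$, so every term is of $\bar{\mathrm{A}}'$-type and Gronwall closes at both levels. In short: same decomposition and same pressure/product machinery, but your commutator step is what makes the scheme rigorous at the low regularity level. (A cosmetic slip only: your formula for $\De\delta P$ has the overall sign reversed, which is immaterial for the estimates.)
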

\begin{proof}
Denote $\bar{u}^i=u^i+b^i$ and $\bar{b}^i=u^i-b^i$ for $i=1,2$. Then, setting $\delta \bar{u}=\bar{u}^1-\bar{u}^2$ and $\delta \bar{b}=\bar{b}^1-\bar{b}^2$, we get
\beq\bca
\pa_t\delta \bar{u}+\bar{b}^1\cd \na \delta \bar{u}+\delta \bar{b}\cd \na \bar{u}^2-\mu\De\delta\bar{u}+\na P_1=0,\\
\pa_t\delta \bar{b}+\bar{u}^1\cd \na \delta \bar{b}+\delta \bar{u}\cd \na \bar{b}^2-\mu\De\delta\bar{b}+\na P_2=0,\\
\D \delta \bar{u}=\D \delta \bar{u}=0, \quad (\delta \bar{u},\delta \bar{b})|_{t=0}=(\delta \bar{u}_0,\delta \bar{b}_0).
\eca\eeq
It follows from Lemma \ref{lem:P} that
\bbal
||\na P_1||_{B^{s-1}_{p,r}}+||\na P_2||_{B^{s-1}_{p,r}}\leq \bar{\mathrm{A}}'(||\delta \bar{u}||_{B^{s-1}_{p,r}}+||\delta \bar{b}||_{B^{s-1}_{p,r}}),
\end{align*}
and
\bbal
||\na P_1||_{B^{s}_{p,r}}+||\na P_2||_{B^{s}_{p,r}}\leq \bar{\mathrm{A}}'(||\delta \bar{u}||_{B^{s}_{p,r}}+||\delta \bar{b}||_{B^{s}_{p,r}}).
\end{align*}
According to Proposition \ref{pr:TDe}, we have
\bbal
||\delta \bar{u}||_{B^{s-1}_{p,r}}+||\delta \bar{b}||_{B^{s-1}_{p,r}}\leq ||\delta \bar{u}_0||_{B^{s-1}_{p,r}}+||\delta \bar{b}_0||_{B^{s-1}_{p,r}}+\int^t_0\bar{\mathrm{A}}'(||\delta \bar{u}||_{B^{s-1}_{p,r}}+||\delta \bar{b}||_{B^{s-1}_{p,r}})\dd \tau,
\end{align*}
which along with Gronwall's inequality yield
\bbal
||\delta \bar{u}||_{B^{s-1}_{p,r}}+||\delta \bar{b}||_{B^{s-1}_{p,r}}\leq (||\delta \bar{u}_0||_{B^{s-1}_{p,r}}+||\delta \bar{b}_0||_{B^{s-1}_{p,r}})e^{\bar{\mathrm{A}}(t)}.
\end{align*}
Similarly, we also have
\bbal
||\delta \bar{u}||_{B^{s}_{p,r}}+||\delta \bar{b}||_{B^{s}_{p,r}}\leq & ||\delta \bar{u}_0||_{B^{s}_{p,r}}+||\delta \bar{b}_0||_{B^{s}_{p,r}}+\int^t_0\bar{\mathrm{A}}'(||\delta \bar{u}||_{B^{s}_{p,r}}+||\delta \bar{b}||_{B^{s}_{p,r}})\dd \tau
\\&+C\int^t_0(||u^2||_{B^{s+1}_{p,r}}+||b^2||_{B^{s+1}_{p,r}})(||\delta \bar{u}||_{B^{s-1}_{p,r}}+||\delta \bar{b}||_{B^{s-1}_{p,r}}) \dd \tau,
\end{align*}
which along with Gronwall's inequality yields
\bbal
||\delta \bar{u}||_{B^{s}_{p,r}}+||\delta \bar{b}||_{B^{s}_{p,r}}&\leq \Big(||\delta \bar{u}_0||_{B^{s}_{p,r}}+||\delta \bar{b}_0||_{B^{s}_{p,r}}\\& \quad +C\int^t_0(||u^2||_{B^{s+1}_{p,r}}+||b^2||_{B^{s+1}_{p,r}})(||\delta \bar{u}||_{B^{s-1}_{p,r}}+||\delta \bar{b}||_{B^{s-1}_{p,r}}) \dd \tau\Big)e^{\bar{\mathrm{A}}(t)}.
\end{align*}
It is easy to see for all $\sigma \in \R$,
\bal\label{li}
||\delta \bar{u}||_{B^{\sigma}_{p,r}}+||\delta \bar{b}||_{B^{\sigma}_{p,r}}\simeq ||\delta u||_{B^{\sigma}_{p,r}}+||\delta b||_{B^{\sigma}_{p,r}}.
\end{align}
Therefore, combining the above inequalities, we obtain the corresponding results.
\end{proof}

\begin{lemm}\label{le-4}
Let $d\geq 2$ and $(s,p,r)$ satisfies \eqref{eq:spr}. Suppose that $(u,b)\in \mathcal{C}([0,T];B^{s+2}_{p,r})$ is the solution of \eqref{MHD2} with initial data $(u_0,b_0)$ and $(v,c)\in \mathcal{C}([0,T];B^{s+2}_{p,r})$ is the solution of \eqref{ideal-MHD} with initial data $(v_0,c_0)$. Denote $\omega=u-v$ and $a=b-c$. Then, we have
\bbal
||\omega(t)||_{B^{s-1}_{p,r}}+||a(t)||_{B^{s-1}_{p,r}}&\leq C \Big(||\omega_0||_{B^{s-1}_{p,r}}+||a_0||_{B^{s-1}_{p,r}}\\& \quad +\mu \int^t_0||u||_{B^{s+1}_{p,r}}\dd \tau+\mu \int^t_0||b||_{B^{s+1}_{p,r}}\dd \tau\Big)e^{\bar{\mathrm{B}}(t)},
\end{align*}
and
\bbal
||\omega(t)||_{B^{s}_{p,r}}+||a(t)||_{B^{s}_{p,r}}&\leq C \Big(||\omega_0||_{B^{s}_{p,r}}+||a_0||_{B^{s}_{p,r}}+\mu \int^t_0||u||_{B^{s+2}_{p,r}}\dd \tau+\mu \int^t_0||b||_{B^{s+2}_{p,r}}\dd \tau\\& \quad +\int^t_0(||v||_{B^{s+1}_{p,r}}+||c||_{B^{s+1}_{p,r}})(||w||_{B^{s-1}_{p,r}}+||a||_{B^{s-1}_{p,r}}) \dd \tau\Big)e^{\bar{\mathrm{B}}(t)},
\end{align*}
with
\bbal
\bar{\mathrm{B}}(t)=C\int^t_0(||u||_{B^s_{p,r}}+||v||_{B^s_{p,r}}+||b||_{B^s_{p,r}}+||c||_{B^s_{p,r}})\dd \tau.
\end{align*}
\end{lemm}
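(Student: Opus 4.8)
The plan is to run the argument of Lemma \ref{le-3}, but with the second viscous solution replaced by the ideal one and the viscosity carried as an external force, exactly as Lemma \ref{le-2} does for Lemma \ref{le-1}. Set the Els\"{a}sser variables $\bar u=u+b$, $\bar b=u-b$ for the viscous solution of \eqref{MHD2} and $\bar v=v+c$, $\bar c=v-c$ for the ideal solution of \eqref{ideal-MHD}. Adding and subtracting the two momentum equations symmetrizes \eqref{MHD2} into
\begin{align*}
\pa_t\bar u+\bar b\cd\na\bar u-\mu\De\bar u+\na P&=0,\\
\pa_t\bar b+\bar u\cd\na\bar b-\mu\De\bar b+\na P&=0,
\end{align*}
and \eqref{ideal-MHD} into $\pa_t\bar v+\bar c\cd\na\bar v+\na Q=0$, $\pa_t\bar c+\bar v\cd\na\bar c+\na Q=0$; all four transport velocities are divergence free. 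Writing the dissipation on the right and subtracting, the differences $\delta\bar u=\bar u-\bar v$, $\delta\bar b=\bar b-\bar c$ satisfy
\begin{align*}
\pa_t\delta\bar u+\bar b\cd\na\delta\bar u+\delta\bar b\cd\na\bar v+\na(P-Q)&=\mu\De\bar u,\\
\pa_t\delta\bar b+\bar u\cd\na\delta\bar b+\delta\bar u\cd\na\bar c+\na(P-Q)&=\mu\De\bar b.
\end{align*}

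The key structural point is that the viscosity now sits only in the forcing $\mu\De\bar u$ (resp.\ $\mu\De\bar b$), so I would apply Proposition \ref{pr:TDe} with $\ep=0$ rather than $\ep=\mu$. Since $\|\mu\De\bar u\|_{B^{s-1}_{p,r}}\les\mu\|\bar u\|_{B^{s+1}_{p,r}}\les\mu(\|u\|_{B^{s+1}_{p,r}}+\|b\|_{B^{s+1}_{p,r}})$, this is precisely what generates the terms $\mu\int_0^t(\|u\|_{B^{s+1}_{p,r}}+\|b\|_{B^{s+1}_{p,r}})\dd\tau$ of the statement. To remove the pressure, take the divergence of the $\delta\bar u$ equation; using $\D\delta\bar u=\D\bar v=0$ and $\D\De\bar u=0$ gives $\na(P-Q)=\na(-\De)^{-1}\D(\bar b\cd\na\delta\bar u+\delta\bar b\cd\na\bar v)$, which the second estimate of Lemma \ref{lem:P} bounds in $B^{s-1}_{p,r}$ by $C(\|\delta\bar u\|_{B^{s-1}_{p,r}}\|\bar b\|_{B^s_{p,r}}+\|\delta\bar b\|_{B^{s-1}_{p,r}}\|\bar v\|_{B^s_{p,r}})$, while the bare coupling $\delta\bar b\cd\na\bar v$ is controlled by Lemma \ref{lem:P} 1).

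Collecting these, together with the commutator control coming from the transport estimate, summing the $\delta\bar u$ and $\delta\bar b$ inequalities and using the equivalence \eqref{li} to return to $\omega,a$, I obtain
\begin{align*}
\|\omega\|_{B^{s-1}_{p,r}}+\|a\|_{B^{s-1}_{p,r}}&\les\|\omega_0\|_{B^{s-1}_{p,r}}+\|a_0\|_{B^{s-1}_{p,r}}+\mu\int_0^t(\|u\|_{B^{s+1}_{p,r}}+\|b\|_{B^{s+1}_{p,r}})\dd\tau\\
&\quad+\int_0^t\bar{\mathrm B}'(\tau)(\|\omega\|_{B^{s-1}_{p,r}}+\|a\|_{B^{s-1}_{p,r}})\dd\tau,
\end{align*}
and Gronwall's inequality gives the first claim. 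For the $B^s_{p,r}$ estimate the same scheme runs at regularity $s$, where $B^s_{p,r}\hookrightarrow W^{1,\infty}$ makes the transport commutator coefficient harmless, the pressure stays Gronwall-type (first estimate of Lemma \ref{lem:P}), and the forcing contributes $\mu\|\bar u\|_{B^{s+2}_{p,r}}\les\mu(\|u\|_{B^{s+2}_{p,r}}+\|b\|_{B^{s+2}_{p,r}})$. The difference is that the coupling loses one derivative: estimating $\delta\bar b\cd\na\bar v$ in $B^s_{p,r}$ by the product rule yields the Gronwall piece $\|\bar v\|_{B^s_{p,r}}\|\delta\bar b\|_{B^s_{p,r}}$ plus $\|\delta\bar b\|_{L^\infty}\|\bar v\|_{B^{s+1}_{p,r}}\les\|\delta\bar b\|_{B^{s-1}_{p,r}}\|\bar v\|_{B^{s+1}_{p,r}}$, which is exactly the extra integrand $(\|v\|_{B^{s+1}_{p,r}}+\|c\|_{B^{s+1}_{p,r}})(\|\omega\|_{B^{s-1}_{p,r}}+\|a\|_{B^{s-1}_{p,r}})$, fed by the already-proved low-regularity bound. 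A final Gronwall gives the second claim.

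The main obstacle is this loss of one derivative in the transport/pressure coupling, which rules out a self-contained $B^s_{p,r}$ estimate and forces the two-tier scheme: the $B^{s-1}_{p,r}$ bound must be proved first and then used to absorb the dangerous term in the $B^s_{p,r}$ bound. A secondary technical point is that at regularity $s-1$, when $s<2+\frac{d}{p}$ one has $s-1<1+\frac{d}{p}$, so the crude $\|\na\delta\bar u\|_{L^\infty}$ term in \eqref{ES2} is not controlled by $\|\delta\bar u\|_{B^{s-1}_{p,r}}$; one must instead invoke the sharp commutator estimate of Lemma \ref{le2} to produce the coefficient $\bar{\mathrm B}'$.
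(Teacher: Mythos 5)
Your proposal is correct and follows essentially the same route as the paper: the paper's proof of Lemma \ref{le-4} also passes to the Els\"asser variables $\bar u=u+b$, $\bar b=u-b$, $\bar v=v+c$, $\bar c=v-c$, writes the identical difference system with $\mu\De\bar u$, $\mu\De\bar b$ carried as forcing terms, and then repeats the argument of Lemma \ref{le-3} (Proposition \ref{pr:TDe} together with Lemma \ref{lem:P}, the two-tier $B^{s-1}_{p,r}$/$B^{s}_{p,r}$ scheme, Gronwall's inequality, and the equivalence \eqref{li}). Your closing observation that at regularity $s-1$ one must use the sharp commutator estimate of Lemma \ref{le2} rather than the crude right-hand side of \eqref{ES2} is a genuine technical point that the paper leaves implicit, but it refines rather than changes the approach.
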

\begin{proof}
Denote $\bar{u}=u+b$, $\bar{b}=u-b$, $\bar{v}=v+c$ and $\bar{c}=v-c$. Then, setting $\bar{\omega}=\bar{u}-\bar{v}$ and $\bar{a}=\bar{b}-\bar{c}$, we have
\beq\bca
\pa_t\bar{\omega}+\bar{b}\cd \na \bar{\omega}+\bar{a}\cd \na \bar{v}+\na \bar{P}_1=\mu\De\bar{u},\\
\pa_t \bar{a}+\bar{u}\cd \na \bar{a}+\bar{\omega}\cd \na \bar{c}+\na \bar{P}_2=\mu \De \bar{b},\\
\D \bar{\omega}=\D \bar{a}=0, \quad (\bar{\omega},\bar{a})|_{t=0}=(\bar{\omega}_0,\bar{a}_0).
\eca\eeq
The similar arguments as in Lemma \ref{le-3}, we have
\bbal
||\bar{\omega}||_{B^{s-1}_{p,r}}+||\bar{a}||_{B^{s-1}_{p,r}}&\leq ||\bar{\omega}_0||_{B^{s-1}_{p,r}}+||\bar{a}_0||_{B^{s-1}_{p,r}} +C\mu \int^t_0||u||_{B^{s+1}_{p,r}}\dd \tau+C\mu \int^t_0||b||_{B^{s+1}_{p,r}}\dd \tau \\& \quad +\int^t_0\bar{\mathrm{B}}'(\tau)(||\bar{\omega}||_{B^{s-1}_{p,r}}+||\bar{a}||_{B^{s-1}_{p,r}})\dd \tau,
\end{align*}
and
\bbal
||\bar{\omega}||_{B^{s}_{p,r}}+||\bar{a}||_{B^{s}_{p,r}}&\leq ||\bar{\omega}_0||_{B^{s}_{p,r}}+||\bar{a}_0||^2_{B^{s}_{p,r}}+C\mu \int^t_0||u||_{B^{s+2}_{p,r}}+C\mu \int^t_0||b||_{B^{s+2}_{p,r}}\dd \tau\\& \quad +C\int^t_0(||v||_{B^{s+1}_{p,r}}+||c||_{B^{s+1}_{p,r}})(||\bar{\omega}||_{B^{s-1}_{p,r}}+||\bar{a}||_{B^{s-1}_{p,r}}) \dd \tau\\& \quad +\int^t_0\bar{\mathrm{B}}'(\tau)(||\bar{\omega}||_{B^{s}_{p,r}}+||\bar{a}||_{B^{s}_{p,r}})\dd \tau.
\end{align*}
This along with Gronwall's inequality and \eqref{li} completes this proof of this lemma.
\end{proof}

\section{Proof of Theorem 1.1 and Theorem 1.2}
In order to simplify the notation, we denote $(u,b)$ by $(u^0,b^0)$ satisfies $\eqref{revist1-MHD}$ or \eqref{revist2-MHD} with $\mu_0=\nu_0=0$.  First, we state the proof of Theorem 1.1.

\noindent\textbf{Proof of Theorem 1.1.} First, according to classical results, there exist a positive $T_n>0$ such that \eqref{revist1-MHD} have a solution $(u^n,b^n)\in \mathcal{C}([0,T_n);H^s)$. Indeed, by Lemma \ref{le-1} and \eqref{Li-yin}, we have
\bbal
||u^n||^2_{H^s}+||b^n||^2_{H^s}\leq (||u^n_0||^2_{H^s}+||b^n_0||^2_{H^s})
e^{C\int^t_0(||u^n||_{C^{0,1}}+||b^n||_{C^{0,1}})\dd \tau}.
\end{align*}
Denote $R=\sup\limits_{n\geq 0}(||u^n_0||_{H^s}+||b^n_0||_{H^s})$. Therefore, by continuity arguments, there exists a positive $T=T(s,d,R)$ such that
\bbal
||u^n||^2_{L^\infty_T(H^s)}+||b^n||^2_{L^\infty_T(H^s)}\leq C(||u^n_0||^2_{H^s}+||b^n_0||^2_{H^s})\leq C.
\end{align*}
Moreover, for all $\gamma>s$, we have
\bbal
||u^n||^2_{H^\gamma}+||b^n||^2_{H^\gamma}&\leq (||u^n_0||^2_{H^\gamma}+||b^n_0||^2_{H^\gamma})
e^{C\int^t_0(||u^n||_{C^{0,1}}+||b^n||_{C^{0,1}})\dd \tau}\\&\leq C(||u^n_0||^2_{H^\gamma}+||b^n_0||^2_{H^\gamma}).
\end{align*}
Let $(u^n_j,b^n_j)\in \mathcal{C}([0,T];H^s)$ be the solution of
\beq\begin{cases}
\partial_tu^n_j+u^n_j\cdot\nabla u^n_j+\mu_n\De u^n_j+\nabla P_{1,j}=b^n_j\cdot \nabla b^n_j, \\
\partial_tb^n_j+u^n_j\cdot \nabla b^n_j+\nu_n\De b^n_j=b^n_j\cdot\nabla u^n_j,\\
\mathrm{div} u^n_j=\mathrm{div} b^n_j=0,\quad (u^n_j,b^n_j)|_{t=0}=S_j(u^n_0,b^n_0).
\end{cases}\eeq
Then, according to Lemma \ref{le-1}, we have
\bbal
||u^n_j-u^n||^2_{H^{s-1}}+||b^n_j-b^n||^2_{H^{s-1}}\leq  C(||(\mathrm{Id}-S_j)u^n_0||^2_{H^{s-1}}+||(\mathrm{Id}-S_j)b^n_0||^2_{H^{s-1}}),
\end{align*}
which along with the fact that $||u^n_j||_{H^{s+1}}+||b^n_j||_{H^{s+1}}\leq C2^j$ leads to
\bbal
& \quad \ ||u^n_j-u^n||^2_{H^{s}}+||b^n_j-b^n||^2_{H^{s}}\\&\leq C(||(\mathrm{Id}-S_j)u^n_0||^2_{H^{s}}+||(\mathrm{Id}-S_j)b^n_0||^2_{H^{s}}\\& \quad \ +\int^t_0(||u^n_j||^2_{H^{s+1}}+||b^n_j||^2_{H^{s+1}})(||u^n_j-u^n||^2_{H^{s-1}}+||b^n_j-b^n||^2_{H^{s-1}}) \dd \tau)
\\&\leq C(||(\mathrm{Id}-S_j)u^n_0||^2_{H^{s}}+||(\mathrm{Id}-S_j)b^n_0||^2_{H^{s}}
\\& \quad \ +2^{2j}||(\mathrm{Id}-S_j)u^n_0||^2_{H^{s-1}}+2^{2j}||(\mathrm{Id}-S_j)b^n_0||^2_{H^{s-1}})
\\& \leq C(||(\mathrm{Id}-S_j)u^n_0||^2_{H^{s}}+||(\mathrm{Id}-S_j)b^n_0||^2_{H^{s}}).
\end{align*}
Using the similar argument and Lemma \ref{le-2}, we can show that
and
\bbal
||u^n_j-u_j||^2_{H^{s}}+||b^n_j-b_j||^2_{H^{s}}\leq C2^{4j}(||u^n_0-u_0||^2_{H^s}+||b^n_0-b_0||^2_{H^s}+\mu^2_n+\nu^2_n).
\end{align*}
Therefore, combing the above inequalities, we obtain
\bbal
& \quad \ ||u^n-u||^2_{L^\infty_T(H^s)}+||b^n-b||^2_{L^\infty_T(H^s)}\\&\leq ||u^n_j-u_j||^2_{L^\infty_T(H^s)}+||b^n_j-b_j||^2_{L^\infty_T(H^s)}
\\&\quad  +||u^n_j-u^n||^2_{L^\infty_T(H^s)}+||b^n_j-b^n||^2_{L^\infty_T(H^s)}
\\&\quad +||u_j-u||^2_{L^\infty_T(H^s)}+||b_j-b||^2_{L^\infty_T(H^s)}
\\& \leq  C(||(\mathrm{Id}-S_j)u^n_0||^2_{H^{s}}+||(\mathrm{Id}-S_j)b^n_0||^2_{H^{s}}+||(\mathrm{Id}-S_j)u_0||^2_{H^{s}}\\& \quad \ +||(\mathrm{Id}-S_j)b_0||^2_{H^{s}}+
2^{4j}||u^n_0-u_0||^2_{H^s}+2^{4j}||b^n_0-b_0||^2_{H^s}+2^{4j}\mu^2_n+2^{4j}\nu^2_n)
\\& \leq  C(||(\mathrm{Id}-S_j)u_0||^2_{H^{s}}+||(\mathrm{Id}-S_j)b_0||^2_{H^{s}}\\& \quad \ +
2^{4j}||u^n_0-u_0||^2_{H^s}+2^{4j}||b^n_0-b_0||^2_{H^s}+2^{4j}\mu^2_n+2^{4j}\nu^2_n).
\end{align*}
This completes the proof of Theorem 1.1. \\

Now, we will prove Theorem 1.2.

\noindent\textbf{Proof of Theorem 1.2.} First, according to classical results, there exist a positive $T_n>0$ such that \eqref{revist2-MHD} have a solution $(u^n,b^n)\in \mathcal{C}([0,T_n);B^s_{p,r})$. Indeed, by Lemma \ref{le-3} and \eqref{Li-yin}, we have
\bbal
||u^n||_{B^s_{p,r}}+||b^n||_{B^s_{p,r}}\leq (||u^n_0||_{B^s_{p,r}}+||b^n_0||_{B^s_{p,r}})
e^{C\int^t_0(||u^n||_{C^{0,1}}+||b^n||_{C^{0,1}})\dd \tau}.
\end{align*}
Denote $\bar{R}=\sup\limits_{n\geq 0}(||u^n_0||_{B^s_{p,r}}+||b^n_0||_{B^s_{p,r}})$. Therefore, by continuity arguments, there exists a positive $T=T(s,p,r,d,\bar{R})$ such that
\bbal
||u^n||_{L^\infty_T(B^s_{p,r})}+||b^n||_{L^\infty_T(B^s_{p,r})}\leq C(||u^n_0||_{B^s_{p,r}}+||b^n_0||_{B^s_{p,r}}).
\end{align*}
Moreover, for all $\gamma>s$, we have
\bbal
||u^n||_{B^\gamma_{p,r}}+||b^n||_{B^\gamma_{p,r}}&\leq (||u^n_0||_{B^\gamma_{p,r}}+||b^n_0||_{B^\gamma_{p,r}})
e^{C\int^t_0(||u^n||_{C^{0,1}}+||b^n||_{C^{0,1}})\dd \tau}\\&\leq C(||u^n_0||_{B^\gamma_{p,r}}+||b^n_0||_{B^\gamma_{p,r}}).
\end{align*}
Now, we set $(u^n_j,b^n_j)$ satisfies the following system:
\beq\begin{cases}
\partial_tu^n_j+u^n_j\cdot\nabla u^n_j+\mu_n\De u^n_j+\nabla P_j=b^n_j\cdot \nabla b^n_j, \\
\partial_tb^n_j+u^n_j\cdot \nabla b^n_j+\mu_n\De b^n_j=b^n_j\cdot\nabla u^n_j,\\
\mathrm{div} u^n_j=\mathrm{div} b^n_j=0,\quad (u^n_j,b^n_j)|_{t=0}=(S_ju^n_0,S_jb^n_0).
\end{cases}\eeq
Similar proof as in Theorem 1.1, by Lemmas \ref{le-3}-\ref{le-4}, we have
\bbal
& \quad \ ||u^n_j-u^n||_{L^\infty_T(B^s_{p,r})}+||b^n_j-b^n||_{L^\infty_T(B^s_{p,r})}\\&\leq C(||(\mathrm{Id}-S_j)u^n_0||_{B^s_{p,r}}+||(\mathrm{Id}-S_j)b^n_0||_{B^s_{p,r}}),
\end{align*}
and
\bbal
& \quad \ ||u^n_j-u_j||_{L^\infty_T(B^s_{p,r})}+||b^n_j-b_j||_{L^\infty_T(B^s_{p,r})}\\&\leq C2^{2j}(||u^n_0-u_0||_{B^s_{p,r}}+||b^n_0-b_0||_{B^s_{p,r}}+\mu_n).
\end{align*}
Then, we can show that
\bbal
&\quad \ ||u^n-u||_{L^\infty_T(B^s_{p,r})}+||b^n-b||_{L^\infty_T(B^s_{p,r})}\\& \leq
C(||(\mathrm{Id}-S_j)u_0||_{B^s_{p,r}}+||(\mathrm{Id}-S_j)b_0||_{B^s_{p,r}}\\& \quad \ +2^{2j}||u^n_0-u_0||_{B^s_{p,r}}+2^{2j}||b^n_0-b_0||_{B^s_{p,r}}+2^{2j}\mu_n).
\end{align*}
This completes the proof of Theorem 1.2.

\vspace*{1em}
\noindent\textbf{Acknowledgements.} This work was
partially supported by NNSFC (No. 11271382), RFDP (No. 20120171110014), MSTDF (No. 098/2013/A3), Guangdong Special Support Program (No. 8-2015) and the key project of NSF of Guangdong Province (No. 1614050000014).


\begin{thebibliography}{99}
\linespread{0}

\bibitem{A.D} H. Abidi and R. Danchin, Optimal bounds for the inviscid limit of Navier-Stokes equations, Asymptot. Anal., 38 (2004), 35-46.

\bibitem{B.C.D} H. Bahouri, J. Chemin and R. Danchin, Fourier Analysis and Nonlinear Partial Differential Equations, Grundlehren der mathematischen Wissenschaften, {343}, {Springer}, Heidelberg (2011).

\bibitem{Danchin} R. Danchin, Estimates in Besov spaces for transport and transport-diffusion equations with almost Lipschitz coefficients, Rev. Mat. Iberoamericana, 21 (2005), 863-888.

\bibitem{D.L} J. D¨ªaz and M. Lerena, On the inviscid and non-resistive limit for the equations of incompressible magnetohydrodynamics, Math. Models Methods Appl. Sci., 12 (2002), 1401-1419.

\bibitem{G.L.Y} Z. Guo, J. Li and Z. Yin, On the inviscid limit of the incompressible Navier-Stokes equations, http://arxiv.org/abs/1612.01068 [math.AP], 2016.

\bibitem{L.Y} J. Li  and Z. Yin, Well-posedness and analytic solutions of the two-component Euler-Poincar\'{e} system, Monatsh Math, http://dx.doi.org/10.1007/s00605-016-0927-8, 2016.

\bibitem{M.Y} C. Miao and B. Yuan, Well-posedness of the ideal MHD system in critical Besov spaces, Methods Appl. Anal., 13 (2006), 89-106.

\bibitem{PP} H. C. Pak and Y. J. Park, Existence of solution for the Euler equations in a critical Besov space $B^1_{\infty,1}(\mathbb{R}^n)$, Comm. Partial Differential Equations, 29 (2004), 1149-1166.

\bibitem{S} H. Swann, The Convergence with Vanishing Viscosity of Nonstationary Navier-Stokes Flow to Ideal Flow in $R_3$, Trans. Amer. Math. Soc., 157 (1971), 373-397.

\bibitem{W} J. Wu, Viscous and inviscid magnetohydrodynamics equations, J. Anal. Math., 73 (1997), 251-265.

\bibitem{X.X.W} Y. Xiao, Z. Xin and J. Wu, Vanishing viscosity limit for the 3D magnetohydrodynamic system with a slip boundary condition, J. Funct. Anal., 257 (2009), 3375-3394.

\bibitem{Z} J. Zhang, The inviscid and non-resistive limit in the Cauchy problem for 3-D nonhomogeneous incompressible magneto-hydrodynamics, Acta Math. Sci. Ser. B Engl. Ed., 31 (2011), 882-896.

\end{thebibliography}
\end{document}